\title{Inverse zero-sum problems\\ in finite Abelian $p$-groups}
\author{Benjamin Girard}
\thanks{\noindent \textit{Mathematics Subject Classification (2000):}
11R27, 11B75, 11P99, 20D60, 20K01, 05E99, 13F05.}
\address{Centre de
Math\'{e}matiques Laurent Schwartz, UMR $7640$ du CNRS, \'Ecole
polytechnique, $91128$ Palaiseau cedex, France.}
\email{benjamin.girard@math.polytechnique.fr}
\theoremstyle{plain}
\newtheorem{theorem}{Theorem}[section]
\newtheorem{proposition}[theorem]{Proposition}
\newtheorem{lem}[theorem]{Lemma}
\newtheorem{corollary}[theorem]{Corollary}
\newtheorem{conjecture}[theorem]{Conjecture}
\theoremstyle{definition}
\theoremstyle{remark}
\def\cnp[#1,#2]{\begin{pmatrix} #1 \\#2 \end{pmatrix}}
\begin{document}

\maketitle \setcounter{page}{1} \vspace{0.0cm}

\begin{abstract} In this paper, we study the minimal number of elements of maximal order within a zero-sumfree sequence in a finite Abelian $p$-group. For this purpose, in the general context of finite Abelian groups, we introduce a new number, for which lower and upper bounds are proved in the case of finite Abelian $p$-groups. Among other consequences, the method that we use here enables us to show that, if we denote by $\exp(G)$ the exponent of the finite Abelian $p$-group $G$ which is considered, then a zero-sumfree sequence $S$ with maximal possible length in $G$ must contain at least $\exp(G)-1$ elements of maximal order, which improves a previous result of W.~Gao and A.~Geroldinger. 
\end{abstract}


\section{Introduction}
\label{Introduction}
Let $\mathcal{P}$ be the set of prime numbers and let $G$ be a finite Abelian group, written additively. By $\exp(G)$ we denote the exponent of $G$. If $G$ is cyclic of order $n$, it will be denoted by $C_n$. In the general case, we can decompose $G$ (see for instance \cite{Samuel}) as a direct product of cyclic groups $C_{n_1} \oplus \dots \oplus C_{n_r}$ where $1 < n_1 \text{ }
|\text{ } \dots \text{ }|\text{ } n_r \in \mathbb{N}$, so that every element $g$ of $G$ can be written $g=[a_1,\dots,a_r]$ (this notation will be used freely in this paper), with $a_i \in C_{n_i}$ for all $i \in \llbracket 1,r \rrbracket=\{1,\dots,r\}$.

\medskip
In this paper, any finite sequence $S=\left(g_1,\dots,g_\ell\right)$ of
$\ell$ elements from $G$, where repetitions are allowed and the ordering of the elements within $S$ is disregarded, will be called a \em sequence \em in $G$ with \em length \em $\left|S\right|=\ell$. For convenience, we will sometimes use the following notation, which is a shorter way to write a sequence $S$ when some of its elements appear several times. For every $g \in G$, we denote by $\mathsf{v}_g(S)$ the multiplicity of $g$ in $S$, so that:
$$S=\displaystyle\prod_{g \in G}g^{\mathsf{v}_g(S)}, \text{ where } \mathsf{v}_g(S) \in \mathbb{N} \text{ for all } g \in G.$$

\medskip
Given a sequence $S=\left(g_1,\dots,g_\ell\right)$ in $G$, we say that
$s \in G$ is a \em subsum \em of $S$ when
$$s=\displaystyle\sum_{i \in I} g_i \text{ for some }\emptyset \varsubsetneq I \subseteq \{1,\dots,\ell\}.$$

\noindent If $0$ is not a subsum of $S$, we say that $S$ is a \em
zero-sumfree sequence\em. If $\sum^\ell_{i=1} g_i=0,$ then $S$ is said
to be a \em zero-sum sequence\em. If moreover one has $\sum_{i \in
I} g_i \neq 0$ for all proper subsets $\emptyset \subsetneq I
\subsetneq \{1,\dots,\ell\}$, $S$ is called a \em minimal zero-sum
sequence\em.

\medskip
In a finite Abelian group $G$, the order of an element $g$ will be
written $\text{ord}_G(g)$ and for every divisor $d$ of the exponent of
$G$, we denote by $G_d$ the subgroup of $G$ consisting of all the
elements of order dividing $d$: $$G_d=\left\{x \in G \text{ } | \text{ } dx=0\right\}.$$ 

\medskip 
\noindent For every sequence $S$ of elements in $G$, we denote by $S_d$ the subsequence of $S$ consisting of all the
elements of order $d$ which are contained in $S$. 

\medskip 
Let $G \simeq C_{n_1} \oplus \dots \oplus C_{n_r},$ with $1 < n_1
\text{ } |\text{ } \dots \text{ }|\text{ } n_r \in \mathbb{N}$, be a finite Abelian 
group. We set:
$$\mathsf{D}^{*}(G)=\displaystyle\sum^r_{i=1} (n_i-1)+1 \hspace{0.2cm} \text{ as well as } \hspace{0.2cm} \mathsf{d}^{*}(G)=\mathsf{D}^{*}(G)-1.$$
By $\mathsf{D}(G)$ we denote the smallest integer $t \in \mathbb{N}^*$ such 
that every sequence $S$ in $G$ with $|S| \geq t$ contains a non-empty zero-sum subsequence.
The number $\mathsf{D}(G)$ is called the \em Davenport constant
\em of the group $G$. 

\medskip
\noindent By $\mathsf{d}(G)$ we denote the greatest length of a zero-sumfree sequence in $G$. It can be readily seen that for every finite Abelian group $G$, one has $\mathsf{d}(G)=\mathsf{D}(G)-1$. 

\medskip
If $G \simeq C_{\nu_1} \oplus \dots \oplus
C_{\nu_s}$, with $\nu_i > 1$ for all $i \in \llbracket 1,s
\rrbracket$, is the longest possible decomposition of $G$ into a
direct product of cyclic groups, then we set
$$\mathsf{k}^{*}(G)=\displaystyle\sum_{i=1}^s
\frac{\nu_i-1}{\nu_i}.$$ 

\medskip
\noindent The \em cross number \em of a sequence
$S=(g_1,\dots,g_\ell)$, denoted by $\mathsf{k}(S)$, is then defined by
$$\mathsf{k}(S)=\displaystyle\sum_{i=1}^{\ell} \frac{1}{\text{ord}_G(g_i)}.$$
The notion of cross number was introduced by U.~Krause in \cite{Krause84} (see also \cite{Krause91}). 
Finally, we define the so-called \em little cross number \em
$\mathsf{k}(G)$ of $G$:
$$\mathsf{k}(G)=\max\{\mathsf{k}(S) | S \text{ zero-sumfree sequence in } G \}.$$

\medskip 
Given a finite Abelian group $G$, two elementary constructions
(see \cite{GeroKoch05}, Proposition $5.1.8$) give
the following lower bounds:
$$\mathsf{d}^{*}(G) \leq \mathsf{d}(G) \hspace{0.5cm} \text{ and } \hspace{0.5cm} \mathsf{k}^{*}(G) \leq \mathsf{k}(G).$$

\medskip
The invariants $\mathsf{d}(G)$ and $\mathsf{k}(G)$ play a key r\^{o}le in the theory of non-unique factorization 
(see for instance Chapter $9$ in \cite{Narkie04}, the book \cite{GeroKoch05} which presents various aspects of the theory, and the survey \cite{GeroKoch06} also). They have been extensively studied during last decades and even if numerous results were proved (see Chapter $5$ of the book \cite{GeroKoch05}, \cite{GaoGero06} for a survey with many references on the subject, and \cite{Girard08} for recent results on the cross number of finite Abelian groups), their exact values are known for very special types of groups only. In the sequel, we will need some of these values in the case of finite Abelian $p$-groups, so we gather them into the following theorem (see \cite{Olso69a} and \cite{GeroCross94}). 

\begin{theorem} \label{Proprietes D et Cross number} Let $p \in \mathcal{P}$, $r \in \mathbb{N}^*$ and
$1 \leq a_1 \leq \dots \leq a_r$, where $a_i \in
\mathbb{N}$ for all $i \in \llbracket 1,r \rrbracket$. Then, for
the $p$-group $G \simeq C_{p^{a_1}} \oplus \cdots \oplus
C_{p^{a_r}}$, the following two statements hold.

\medskip
\begin{itemize}
\item[$(i)$] $$\mathsf{d}(G)=\displaystyle\sum^r_{i=1} \left(p^{a_i}-1\right)=\mathsf{d}^{*}(G).$$ 
\item[$(ii)$] $$\mathsf{k}(G)=\displaystyle\sum^r_{i=1} \left(\frac{p^{a_i}-1}{p^{a_i}}\right)=\mathsf{k}^{*}(G).$$
\end{itemize}
\end{theorem}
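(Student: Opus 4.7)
Since the lower bounds $\mathsf{d}^*(G) \le \mathsf{d}(G)$ and $\mathsf{k}^*(G) \le \mathsf{k}(G)$ have just been recalled, the task in both (i) and (ii) is to prove the matching upper bounds.

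For (i), the plan is Olson's classical group-algebra argument. Let $A := \mathbb{F}_p[G]$ and pick generators $e_1, \ldots, e_r$ of the cyclic factors. The Frobenius in characteristic $p$ yields $(e_i-1)^{p^{a_i}} = e_i^{p^{a_i}} - 1 = 0$, and a dimension count over $\mathbb{F}_p$ promotes the natural surjection to an isomorphism
$$A \;\cong\; \mathbb{F}_p[X_1, \ldots, X_r]\big/\big(X_1^{p^{a_1}}, \ldots, X_r^{p^{a_r}}\big), \qquad X_i \longmapsto e_i - 1.$$
Under this isomorphism the augmentation ideal $I$ corresponds to $(X_1, \ldots, X_r)$; the surviving monomial $X_1^{p^{a_1}-1}\cdots X_r^{p^{a_r}-1}$ shows that the nilpotency index of $I$ is exactly $\sum_i(p^{a_i}-1) + 1 = \mathsf{D}^*(G)$. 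For any sequence $S = (g_1, \ldots, g_\ell)$ with $\ell \ge \mathsf{D}^*(G)$, the product $\prod_i (g_i - 1)$ lies in $I^\ell = 0$. Expanding it and reading the coefficient of the group-element $[0]$ in
$$0 \;=\; \sum_{J \subseteq \{1, \ldots, \ell\}} (-1)^{\ell - |J|}\Big[\textstyle\sum_{i \in J} g_i\Big]$$
forces the number of subsets $J$ with $\sum_{i\in J} g_i = 0$ to be divisible by $p$. Since $J = \emptyset$ is one such subset, there must exist a non-empty $J$ realizing a zero-sum subsequence, proving $\mathsf{d}(G) \le \mathsf{d}^*(G)$.

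For (ii), the upper bound $\mathsf{k}(G) \le \mathsf{k}^*(G)$ (Geroldinger) is more delicate. I would proceed by induction on $|G|$; the base case $G \cong C_p$ follows from (i), since a zero-sumfree $S$ then consists of at most $p-1$ elements of order $p$. For the inductive step, split a zero-sumfree $S$ in $G$ as $S = S_p \cdot T$, where $S_p$ gathers its order-$p$ elements and $T$ the rest. Multiplication by $p$ sends $T$ to a sequence $pT$ in the proper subgroup $pG \cong \bigoplus_{a_i \ge 2} C_{p^{a_i-1}}$, and since $\text{ord}_G(g) = p \cdot \text{ord}_G(pg)$ for every $g \in T$, one has the clean identity
$$\mathsf{k}(S) \;=\; \frac{|S_p|}{p} + \frac{1}{p}\,\mathsf{k}(pT).$$
Since $S_p$ is zero-sumfree in $G_p \cong C_p^r$, part (i) applied to $G_p$ yields $|S_p| \le r(p-1)$; combined with the induction hypothesis $\mathsf{k}(pT) \le \mathsf{k}^*(pG)$ and the easily checked identity $r(p-1)/p + \mathsf{k}^*(pG)/p = \mathsf{k}^*(G)$, this would close the argument.

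The genuine obstacle is that $pT$ need not be zero-sumfree in $pG$: a hidden relation $\sum_{i \in J} pg_i = 0$ only forces $\sigma := \sum_{i \in J} g_i$ to lie in $G_p$, not to vanish. The plan is to leverage the zero-sumfreeness of $S$ itself: no such nonzero $\sigma$ may coincide with the negative of a subsum of $S_p$, for otherwise one would assemble an actual zero-sum of $S$. Turning this avoidance into a quantitative cross-number inequality---whether through an extremal analysis of subsum sets in $C_p^r$ or through an iterative surgery that replaces offending sub-blocks of $T$ by elements of $G_p$ with controlled cross-number effect---is where I expect the bulk of the technical work of part (ii) to lie, and where the finer structure of zero-sumfree sequences in elementary $p$-groups will have to be invoked.
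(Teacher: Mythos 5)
First, a point of comparison: the paper offers no proof of this theorem. It is stated as a compilation of known results, with $(i)$ attributed to Olson \cite{Olso69a} and $(ii)$ to Geroldinger \cite{GeroCross94}, so the only meaningful benchmark is whether your argument would reprove those sources. For $(i)$ it essentially would: your group-algebra argument is Olson's, and its three ingredients --- the isomorphism $\mathbb{F}_p[G]\cong\mathbb{F}_p[X_1,\dots,X_r]/(X_1^{p^{a_1}},\dots,X_r^{p^{a_r}})$ via $X_i\mapsto e_i-1$, the identification of the augmentation ideal with $(X_1,\dots,X_r)$ and of its nilpotency index with $\mathsf{D}^*(G)$, and the expansion of $\prod_i(g_i-1)$ --- are all correct. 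One slip: the coefficient of $[0]$ is the \emph{signed} sum $\sum_J(-1)^{\ell-|J|}$ over the zero-sum subsets $J$, so for odd $p$ it is not the plain number of such subsets that is divisible by $p$; the conclusion is unaffected, since a lone term $(-1)^{\ell}$ cannot vanish in $\mathbb{F}_p$.

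Part $(ii)$, by contrast, is not a proof, and the gap you flag yourself is not a deferred technicality but the entire content of the statement. Your reduction is arithmetically consistent (the identity $\mathsf{k}(S)=|S_p|/p+\mathsf{k}(pT)/p$ and the bookkeeping $r(p-1)/p+\mathsf{k}^*(pG)/p=\mathsf{k}^*(G)$ both check out), but it is only usable if $pT$ is zero-sumfree in $pG$ and if $S_p$ can be bounded independently of $T$, and neither holds. A subsequence of $T$ whose image under multiplication by $p$ sums to zero produces a nonzero element of $G_p$, and the question of how many pairwise disjoint such subsequences can coexist with $S_p$ inside $G_p\cong C_p^r$ without assembling a genuine zero-sum of $S$ is precisely an invariant of the type $\mathsf{D}_{(p,p^{a_r})}(G)$ that Lemma \ref{lemme clef} and Proposition \ref{propmarrantegenerale} of this paper are designed to control; the ``iterative surgery with controlled cross-number effect'' you gesture at is therefore itself a theorem requiring Olson's result plus a nontrivial combinatorial argument, not a routine repair. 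As written, your part $(ii)$ establishes only the lower bound $\mathsf{k}^*(G)\le\mathsf{k}(G)$ already recalled in the Introduction; for the upper bound you must either carry out the missing extremal analysis in full or, as the paper does, simply invoke \cite{GeroCross94}, whose argument does not proceed by the naive mod-$p$ filtration you propose.
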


\medskip
In \cite{Olso69a}, J.~Olson actually proved a more general result than Theorem \ref{Proprietes D et Cross number} $(i)$, which will be useful in this article. So as to state this theorem, we need to introduce the following notation. For every element $g \in G$, the \em height \em of $g$, denoted by $\alpha(g)$, is defined in the following fashion:
$$\alpha(g)=\max\{p^n \text{ } | \text{ } \exists h \in G \text{ with } g=p^n h\}.$$

We can now state Olson's result. 
\begin{theorem}\label{Olson pour les p-groupes} Let $G$ be a finite Abelian $p$-group and $S=(g_1,\dots,g_\ell)$ be a sequence in $G$ such that one has:
$$\displaystyle\sum^\ell_{i=1} \alpha(g_i) > \mathsf{d}(G).$$
Then, $S$ cannot be a zero-sumfree sequence.
\end{theorem}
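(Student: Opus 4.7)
The plan is to work in the modular group algebra $\mathbb{F}_p[G]$ and translate the hypothesis on heights into a depth estimate in the augmentation ideal $\mathfrak{m}$. Denoting by $X_g$ the basis vector attached to $g \in G$, the ideal $\mathfrak{m}$ is generated by $\{1 - X_g : g \in G\}$. Since $G$ is a finite Abelian $p$-group, $\mathbb{F}_p[G]$ is a local Artinian ring with nilpotent maximal ideal $\mathfrak{m}$. Writing $G \simeq C_{p^{a_1}} \oplus \cdots \oplus C_{p^{a_r}}$ with generators $\gamma_1,\ldots,\gamma_r$ and setting $y_j = 1 - X_{\gamma_j}$, the relation $X_{\gamma_j}^{p^{a_j}} = 1$ combined with Frobenius in characteristic $p$ produces $y_j^{p^{a_j}} = 0$, whence the presentation
$$\mathbb{F}_p[G] \;\simeq\; \mathbb{F}_p[y_1,\ldots,y_r] \big/ \bigl(y_1^{p^{a_1}},\ldots,y_r^{p^{a_r}}\bigr).$$
From this presentation one reads off that the nilpotency index of $\mathfrak{m}$ equals $\sum_{j=1}^r (p^{a_j} - 1) + 1 = \mathsf{d}(G) + 1$: the monomial $y_1^{p^{a_1} - 1} \cdots y_r^{p^{a_r} - 1}$ is non-zero, while every monomial of strictly larger total degree contains some factor $y_j^{p^{a_j}} = 0$.

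The heart of the argument will be the following height-to-depth conversion. For $g \in G$ with $\alpha(g) = p^m$, pick $h \in G$ such that $g = p^m h$; then $X_g = X_h^{p^m}$, and Frobenius yields
$$1 - X_g \;=\; 1 - \bigl(1 + (X_h - 1)\bigr)^{p^m} \;=\; -(X_h - 1)^{p^m} \;\in\; \mathfrak{m}^{\alpha(g)}.$$
Applied to every term of $S = (g_1,\ldots,g_\ell)$, this gives $\prod_{i=1}^\ell (1 - X_{g_i}) \in \mathfrak{m}^{\sum_i \alpha(g_i)}$. The hypothesis $\sum_i \alpha(g_i) > \mathsf{d}(G)$ then forces the product to belong to $\mathfrak{m}^{\mathsf{d}(G)+1} = 0$.

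To conclude, I would expand the product:
$$\prod_{i=1}^\ell (1 - X_{g_i}) \;=\; \sum_{I \subseteq \llbracket 1,\ell \rrbracket} (-1)^{|I|} \, X_{\sum_{i \in I} g_i}.$$
If $S$ were zero-sumfree, the only subset $I$ with $\sum_{i \in I} g_i = 0$ would be $I = \emptyset$, so the coefficient of $X_0$ on the right-hand side would equal $1 \in \mathbb{F}_p$, contradicting the vanishing of the left-hand side. Hence $S$ must admit a non-empty zero-sum subsequence. The main obstacle in this outline is the precise computation of the nilpotency index of $\mathfrak{m}$, which is essentially the algebraic shadow of Theorem \ref{Proprietes D et Cross number} $(i)$; once that input is in hand, the rest of the proof is a direct manipulation in the group algebra.
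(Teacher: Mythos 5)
Your proposal is correct, and it is essentially Olson's original argument from \cite{Olso69a}: the paper itself does not prove this theorem but simply cites that reference, where exactly this group-algebra computation (heights landing in powers of the augmentation ideal, nilpotency index $\mathsf{d}(G)+1$, expansion of $\prod_i(1-X_{g_i})$ and inspection of the coefficient of $X_0$) is carried out. The one ``obstacle'' you flag is not really one: once you have the presentation $\mathbb{F}_p[y_1,\ldots,y_r]/(y_1^{p^{a_1}},\ldots,y_r^{p^{a_r}})$ (surjectivity plus a dimension count over $\mathbb{F}_p$), the vanishing $\mathfrak{m}^{\mathsf{d}(G)+1}=0$ follows from the pigeonhole observation you already made, and that is all the theorem needs.
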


\section{Four inverse problems in zero-sum theory}
\label{Section conjectures}
Let $G$ be a finite Abelian group. What can be said about the exact structure of a long zero-sumfree sequence in $G$? The answer to this question, which would be useful in order to tackle problems in non-unique factorization theory, seems very difficult to obtain in general, and proves to highly rely on the structure of the group itself. Indeed, several results (see for instance \cite{GaoGero99}) show that one cannot hope to find a simple and exact structural characterization which would describe long zero-sumfree sequences in general. Nevertheless, one could try to find, instead of a complete characterization, some general properties which have to be satisfied by all the long zero-sumfree sequences, whatever the group is. In \cite{Girard09}, the author adressed two general conjectures concerning this type of inverse problems. 

\medskip
The first one bears upon the distribution of orders within a long zero-sumfree sequence in a finite Abelian group $G$, and is the following.
\begin{conjecture} \label{Conjecture Cross number} Let $G \simeq C_{n_1} \oplus \dots \oplus C_{n_r},$ with $1 < n_1
\text{ } |\text{ } \dots \text{ }|\text{ } n_r \in \mathbb{N}$, be a finite Abelian group.
Given a zero-sumfree sequence $S$ in $G$ verifying $\left| S \right| \geq \mathsf{d}^*(G)$, one always 
has the following inequality:
\begin{eqnarray*} \mathsf{k}(S) \leq \displaystyle\sum_{i=1}^r \left(\frac{n_i-1}{n_i}\right). \end{eqnarray*}
\end{conjecture}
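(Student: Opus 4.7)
My plan is to settle the $p$-group case directly from the results already stated in the excerpt, and then to outline the main obstruction in the general case.

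For a $p$-group $G \simeq C_{p^{a_1}} \oplus \cdots \oplus C_{p^{a_r}}$ with $1 \leq a_1 \leq \cdots \leq a_r$ (so that $n_i = p^{a_i}$), Theorem \ref{Proprietes D et Cross number}$(ii)$ gives
$$\mathsf{k}(G) \;=\; \mathsf{k}^{*}(G) \;=\; \sum_{i=1}^r \frac{p^{a_i}-1}{p^{a_i}} \;=\; \sum_{i=1}^r \frac{n_i - 1}{n_i}.$$
Since by definition $\mathsf{k}(S) \leq \mathsf{k}(G)$ for every zero-sumfree sequence $S$ in $G$, the inequality of the conjecture holds unconditionally in the $p$-group case; the length hypothesis $\card{S} \geq \mathsf{d}^{*}(G)$ is not even invoked. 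In particular it holds for any zero-sumfree sequence of maximal length.

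For a general finite Abelian $G$, the length hypothesis is genuinely needed, because $\mathsf{k}(G)$ can strictly exceed $\sum_i (n_i-1)/n_i$ (as already happens for $G = C_6 \oplus C_6$), and so arbitrary zero-sumfree sequences can violate the desired inequality. A natural strategy is induction on the number of primes dividing $\card{G}$: decompose $G = H \oplus K$ into Sylow components, project $S$ via $\pi_{H}$ and $\pi_{K}$, and try to feed each projection into the $p$-group case already handled.

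The main obstacle, which is why I expect this to be the hard step, is twofold. First, the cross number does not interact additively with a Sylow decomposition: for $g = h + k$ with $\gcd(\mathrm{ord}(h),\mathrm{ord}(k)) = 1$ one has $\tfrac{1}{\mathrm{ord}(g)} = \tfrac{1}{\mathrm{ord}(h)\mathrm{ord}(k)}$ rather than $\tfrac{1}{\mathrm{ord}(h)} + \tfrac{1}{\mathrm{ord}(k)}$, so summing the bounds obtained on the Sylow factors does not yield the target bound on $\mathsf{k}(S)$. Second, the projections $\pi_{H}(S)$ and $\pi_{K}(S)$ of a zero-sumfree sequence need not be zero-sumfree in $H$ and $K$ respectively, so one cannot even apply the inductive hypothesis to them naively. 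A successful argument would therefore need to couple the length hypothesis $\card{S} \geq \mathsf{d}^{*}(G)$ with a finer structural analysis---plausibly applying Theorem \ref{Olson pour les p-groupes} inside each Sylow component and controlling the heights $\alpha(g_i)$---so as to force the extremal sequence $S$ into a configuration aligned with the invariant factor decomposition, in which the multiplicative defect between orders and Sylow orders disappears.
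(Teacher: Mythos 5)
The statement you were asked to prove is Conjecture \ref{Conjecture Cross number}, which the paper itself does not prove: it is presented as an open conjecture, and the paper only records the known cases in Theorem \ref{theorem Girard}, with the proofs delegated to \cite{Girard09} and \cite{Schmid09}. So there is no internal proof to compare against, and no complete proof should be expected. Within that framing, your treatment of the $p$-group case is correct and is essentially the argument underlying Theorem \ref{theorem Girard}$(i)$: for a $p$-group the invariant-factor decomposition $C_{p^{a_1}}\oplus\cdots\oplus C_{p^{a_r}}$ is already the longest possible decomposition into cyclic factors, so Theorem \ref{Proprietes D et Cross number}$(ii)$ gives $\mathsf{k}(G)=\mathsf{k}^*(G)=\sum_{i=1}^r (n_i-1)/n_i$, and since $\mathsf{k}(S)\leq\mathsf{k}(G)$ by definition the bound holds for \emph{every} zero-sumfree sequence, the length hypothesis being vacuous here. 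Your remark that the hypothesis is genuinely needed in general is also right: for $G=C_6\oplus C_6$ one has $\mathsf{k}(G)\geq\mathsf{k}^*(G)=\tfrac{7}{3}>\tfrac{5}{3}=\sum_i(n_i-1)/n_i$, so the inequality fails for short sequences of large cross number. The obstructions you identify to a Sylow-decomposition induction (the multiplicativity $1/\mathrm{ord}(g)=1/(\mathrm{ord}(h)\,\mathrm{ord}(k))$ across coprime components rather than additivity, and the loss of zero-sumfreeness under projection) are the genuine difficulties. The only caution is one of scope: what you have written proves case $(i)$ of Theorem \ref{theorem Girard} and honestly sketches why the rest is hard; it is not, and does not claim to be, a proof of the conjecture itself, whose remaining known cases (cyclic, rank two, $C_2\oplus C_2\oplus C_{2n}$) rely on different, group-specific arguments.
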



The following theorem gathers what is currently known concerning Conjecture \ref{Conjecture Cross number}. Statements $(i)$, $(ii)$ and $(iii)$ were proved by the author in \cite{Girard09} (see Proposition $2.3$ and Theorem $2.4$). Statement $(iv)$ was recently proved by W.~Schmid in \cite{Schmid09} (see Corollary $3.4$).

\begin{theorem}\label{theorem Girard} Conjecture \ref{Conjecture Cross number} holds whenever:
\begin{itemize}
\item[$(i)$] $G$ is a finite Abelian $p$-group.

\item[$(ii)$] $G$ is a finite cyclic group.

\item[$(iii)$] $G$ is a finite Abelian group of rank two.

\item[$(iv)$] $G \simeq C_2 \oplus C_2 \oplus C_{2n}$, where $n \in \mathbb{N}^*$.
\end{itemize}
\end{theorem}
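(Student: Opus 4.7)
The plan is to treat the four cases separately, using progressively more structural input as the group becomes less restricted.

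For statement $(i)$, the point is that Theorem \ref{Proprietes D et Cross number}$(ii)$ already contains everything needed. For a finite Abelian $p$-group $G \simeq C_{p^{a_1}} \oplus \cdots \oplus C_{p^{a_r}}$, the canonical decomposition $C_{n_1} \oplus \cdots \oplus C_{n_r}$ with $n_1 \mid \cdots \mid n_r$ that appears on the right-hand side of Conjecture \ref{Conjecture Cross number} coincides with the longest decomposition into non-trivial cyclic summands, since each $C_{p^{a}}$ is indecomposable as a direct sum of cyclic groups; hence $\sum_{i=1}^r (n_i-1)/n_i$ equals $\mathsf{k}^{*}(G)$. Theorem \ref{Proprietes D et Cross number}$(ii)$ gives $\mathsf{k}(G) = \mathsf{k}^{*}(G)$, and by the very definition of $\mathsf{k}(G)$ every zero-sumfree sequence $S$ in $G$ satisfies $\mathsf{k}(S) \leq \mathsf{k}(G)$. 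Thus $(i)$ holds in fact without any hypothesis on $|S|$, as a direct corollary of the already stated evaluation of $\mathsf{k}(G)$ for $p$-groups.

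For statement $(ii)$, the idea is to appeal to the classical inverse result for cyclic groups, which asserts that any zero-sumfree sequence $S$ in $C_n$ of maximal length $\mathsf{d}(C_n) = n-1$ consists of $n-1$ copies of a single generator of $C_n$. Since $\mathsf{d}^{*}(C_n) = n-1$, the hypothesis $|S| \geq \mathsf{d}^{*}(C_n)$ forces $|S| = n-1$, so such an $S$ has cross number $(n-1)/n$, which is exactly the right-hand side of the conjecture.

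For statements $(iii)$ and $(iv)$, the approach is purely structural. In rank two one would invoke the description, due to Gao--Geroldinger and subsequent refinements, of the zero-sumfree sequences of maximal length $\mathsf{d}^{*}(G) = n_1 + n_2 - 2$ in $G \simeq C_{n_1} \oplus C_{n_2}$ with $n_1 \mid n_2$; for the family $G \simeq C_2 \oplus C_2 \oplus C_{2n}$ one would follow Schmid's sharp inverse theorem at length $\mathsf{d}^{*}(G) = 2n+2$. In each case the structure theorem pins $S$ down, up to an automorphism of $G$, on a basis of $G$ and into a small number of closed-form patterns, and the inequality $\mathsf{k}(S) \leq \mathsf{k}^{*}(G)$ then reduces to a direct computation on these finitely many extremal shapes. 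The main obstacle is the structural input itself: outside the $p$-group, cyclic and small-rank settings no uniform description of extremal zero-sumfree sequences is currently known, which is precisely why Conjecture \ref{Conjecture Cross number} has so far been established only case by case rather than by a single unified argument, and why any further extension of the list $(i)$--$(iv)$ presently hinges on proving a new inverse theorem for the corresponding family.
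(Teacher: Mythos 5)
A preliminary remark on the comparison: the paper does not actually prove Theorem \ref{theorem Girard}. It is a survey-type statement, with $(i)$--$(iii)$ cited from \cite{Girard09} (Proposition 2.3 and Theorem 2.4) and $(iv)$ from \cite{Schmid09} (Corollary 3.4), so there is no in-paper proof to match against. Your arguments for $(i)$ and $(ii)$ are nevertheless correct and complete. For $(i)$, cyclic $p$-groups are indecomposable, so the invariant-factor decomposition appearing in Conjecture \ref{Conjecture Cross number} is the longest decomposition, the right-hand side equals $\mathsf{k}^*(G)$, and Theorem \ref{Proprietes D et Cross number}$(ii)$ gives $\mathsf{k}(S)\leq \mathsf{k}(G)=\mathsf{k}^*(G)$ for every zero-sumfree $S$, with no length hypothesis; this is essentially the argument of the cited Proposition 2.3. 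For $(ii)$, the hypothesis $|S|\geq \mathsf{d}^*(C_n)=\mathsf{d}(C_n)=n-1$ forces $S=g^{n-1}$ for a generator $g$ by the classical inverse theorem for $C_n$, whence $\mathsf{k}(S)=(n-1)/n$; note that here the length hypothesis is genuinely needed, since $\mathsf{k}(C_n)>(n-1)/n$ for non-prime-power $n$, and you use it correctly.

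The gap is in $(iii)$, and to a lesser extent $(iv)$. What you offer there is a plan rather than a proof: ``one would invoke'' a characterization of all maximal-length zero-sumfree sequences in rank-two groups, and the inequality ``reduces to a direct computation'' that is not carried out. That characterization is itself a deep theorem which, at the time this paper was written, was known only conditionally (it was completed for all rank-two groups only later), so your proposed route for $(iii)$ rests on an input that is both unavailable in the paper's framework and far heavier than necessary. The actual proof of $(iii)$ in \cite{Girard09} uses no structure theorem at all: as the paper itself indicates at the start of Section \ref{Outline of the method}, it is another application of the counting method built on Lemma \ref{lemme clef} and Proposition \ref{propmarrantegenerale}, which bounds the number of elements of each order in a long zero-sumfree sequence and thereby bounds $\mathsf{k}(S)$ directly. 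For $(iv)$ your deferral to Schmid's inverse theorem is indeed the intended source, but again the verification on the extremal configurations is asserted rather than performed. As written, $(iii)$ and $(iv)$ are therefore not established by your argument.
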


The reader interested in this type of problems is also referred to Section $7$ in \cite{Girard09}, where the following dual version of Conjecture \ref{Conjecture Cross number}, on the maximal possible length of a zero-sumfree sequence with large cross number, is discussed.

\begin{conjecture}\label{Conjecture Davenport} 
Let $G$ be a finite Abelian group and $G \simeq C_{\nu_1} \oplus \dots \oplus C_{\nu_s}$, with $\nu_i > 1$ for all $i \in \llbracket 1,s \rrbracket$, be its longest possible decomposition into a direct product of cyclic groups. 
Given a zero-sumfree sequence $S$ in $G$ verifying $\mathsf{k}(S) \geq \mathsf{k}^*(G)$, one always has the following inequality:
\begin{eqnarray*} \left| S \right| \leq \displaystyle\sum_{i=1}^s (\nu_i-1). \end{eqnarray*}
\end{conjecture}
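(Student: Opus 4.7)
The plan is to handle first the $p$-group case, in which the statement is essentially free, then sketch a reduction strategy for general finite Abelian groups, where all of the difficulty lies.

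When $G\simeq C_{p^{a_1}}\oplus\cdots\oplus C_{p^{a_r}}$ is a finite Abelian $p$-group, the canonical invariant-factor decomposition already coincides with the longest decomposition into nontrivial cyclic factors, so $s=r$ and the $\nu_i$ equal the $p^{a_i}$ up to reordering. Consequently $\sum_{i=1}^s(\nu_i-1)=\mathsf{d}^*(G)$, and Theorem \ref{Proprietes D et Cross number}$(i)$ yields $\mathsf{d}(G)=\mathsf{d}^*(G)$. Thus every zero-sumfree sequence $S$ in $G$ satisfies $|S|\leq\mathsf{d}(G)=\sum_{i=1}^s(\nu_i-1)$ regardless of the value of $\mathsf{k}(S)$, and the conjecture holds unconditionally in this setting.

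For a general finite Abelian group, the natural attack is to exploit the primary decomposition $G=G_{p_1}\oplus\cdots\oplus G_{p_k}$ together with the additivities $\mathsf{k}^*(G)=\sum_j\mathsf{k}^*(G_{p_j})$ and $\sum_{i=1}^s(\nu_i-1)=\sum_j\mathsf{d}^*(G_{p_j})$. Given a zero-sumfree sequence $S$ with $\mathsf{k}(S)\geq\mathsf{k}^*(G)$, one would like to produce, for each prime $p_j$ dividing $|G|$, a zero-sumfree sequence $S^{(j)}$ in $G_{p_j}$ with $\mathsf{k}(S^{(j)})\geq\mathsf{k}^*(G_{p_j})$ and $|S|\leq\sum_j|S^{(j)}|$; the $p$-group case just established would then close the argument by summation. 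The obvious candidates, namely the primary projections of $S$, need not be zero-sumfree and may contain the identity, so I would instead group the elements of $S$ according to the set of primes dividing their order and split each block along its primary support, perhaps combined with an Olson-type height argument in the spirit of Theorem \ref{Olson pour les p-groupes} within each $p$-component.

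The main obstacle is the coupling between distinct primary components introduced by elements of mixed-prime order: already in $G=C_6$ a zero-sumfree sequence attaining $\mathsf{k}^*(G)$ is not, in general, a concatenation of sequences drawn from $C_2$ and from $C_3$. A genuine proof therefore seems to require a structural classification of the cross-number-extremal zero-sumfree sequences, which at the level of its dual formulation Conjecture \ref{Conjecture Cross number} is currently available only in the partial cases recorded in Theorem \ref{theorem Girard}; any successful general argument will presumably have to bootstrap from those results and propagate them to non-$p$-groups via the primary decomposition described above.
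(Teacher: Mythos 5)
The statement is a conjecture that the paper does not prove in general: the only case it settles is that of finite Abelian $p$-groups, and it does so by exactly the observation you make, namely that for a $p$-group the longest decomposition coincides with the invariant-factor decomposition, so $\sum_{i=1}^{s}(\nu_i-1)=\mathsf{d}^*(G)=\mathsf{d}(G)$ by Theorem \ref{Proprietes D et Cross number}~$(i)$ and the length bound holds for \emph{every} zero-sumfree sequence, with no hypothesis on $\mathsf{k}(S)$. Your $p$-group argument therefore matches the paper's, and your assessment that the general case resists the naive primary-decomposition reduction and remains open (already for cyclic groups) agrees with the paper's own remark following the conjecture.
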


It can readily be seen, using Theorem \ref{Proprietes D et Cross number} $(i)$, that Conjecture \ref{Conjecture Davenport} holds for finite Abelian $p$-groups, yet this conjecture remains widely open, even in the case of finite cyclic groups.



\medskip
In this article, we study two other inverse zero-sum problems. The first one deals with the minimal number of elements of maximal order within a long zero-sumfree sequence in a finite Abelian group. This question was raised and investigated by W.~Gao and A.~Geroldinger (see Section $6$ in \cite{GaoGero99}), and more recently, studied by the author in the case of finite Abelian groups of rank two (see Theorem $2.5$ in \cite{Girard09}). In the present paper, we consider the more general problem of the minimal number of elements of maximal order within any zero-sumfree sequence in a finite Abelian group, and obtain new results in the context of finite Abelian $p$-groups.




\medskip
In order to study this kind of inverse zero-sum problems, we propose to introduce the following number. Given a finite Abelian group $G$ and an integer $\delta \in \llbracket 0,\mathsf{d}(G)-1 \rrbracket$, we denote by $\Gamma_{\delta}(G)$ the minimal number of elements of maximal order contained in a zero-sumfree sequence $S$ with length $\left| S \right| \geq \mathsf{d}(G)-\delta$.

\medskip
In Section \ref{Outline of the method}, we present a general method which was introduced in \cite{Girard08} for the study of the cross number of finite Abelian groups. Then, using this method, we prove in Section \ref{Proof of the main theorem} the following theorem, which gives a lower bound for $\Gamma_{\delta}(G)$ in the special case of finite Abelian $p$-groups. 

\begin{theorem}\label{main theorem} Let $G \simeq C_{p^{a_1}} \oplus \cdots \oplus C_{p^{a_r}}$, where $p \in \mathcal{P}$, $r \in \mathbb{N}^*$ and $1 \leq a_1 \leq \dots \leq a_r$, with $a_i \in \mathbb{N}$ for all $i \in \llbracket 1,r \rrbracket$. Let also $\delta \in \llbracket 0,\mathsf{d}(G)-1 \rrbracket$ and $j_0=\min \{i \in \llbracket 1,r \rrbracket \text{ } | \text{ } a_i=a_r\}$. Then, one has:  
$$\Gamma_{\delta}(G) \geq (p^{a_r}-1)+(r-j_0)(p-1)p^{a_r-1}-\delta-\left\lfloor \frac{\delta}{\left(r-j_0+1\right)(p-1)} \right\rfloor.$$
\end{theorem}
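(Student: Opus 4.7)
The plan is to combine Olson's theorem (Theorem~\ref{Olson pour les p-groupes}) with a careful analysis of how the non-maximal-order part of $S$ sits inside a proper subgroup of $G$. Let $S$ be a zero-sumfree sequence in $G$ with $|S|\ge\mathsf{d}(G)-\delta$, let $m$ be the number of maximal-order elements of $S$, and set $s:=r-j_0+1$ so that $G$ contains exactly $s$ cyclic factors of top size $p^{a_r}$. Every non-maximal-order element of $S$ lies in the subgroup $H:=G_{p^{a_r-1}}\simeq C_{p^{a_1}}\oplus\cdots\oplus C_{p^{a_{j_0-1}}}\oplus C_{p^{a_r-1}}^s$, and the subsequence $S_<$ of these elements is zero-sumfree in $H$. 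Theorem~\ref{Proprietes D et Cross number}$(i)$ applied to $H$ gives $\mathsf{d}(H)=\mathsf{d}(G)-s(p-1)p^{a_r-1}$, yielding the naive estimate $m\ge s(p-1)p^{a_r-1}-\delta$. This already recovers the term $(r-j_0)(p-1)p^{a_r-1}-\delta$ of the target but misses the additive constant $p^{a_r-1}-1$ and the floor correction $\lfloor\delta/(s(p-1))\rfloor$.

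To close the gap, the key is to apply Olson's theorem a second time, now to the whole sequence $S$ inside $G$: since every element of maximal order has height $\alpha(g)=1$, one obtains $m+\sum_{g\in S_<}\alpha(g)\le\mathsf{d}(G)$. An element $g\in S_<$ satisfies $\alpha(g)=1$ precisely when some coordinate of $g$ in a non-top component (index $i<j_0$) is a unit, while $\alpha(g)\ge p$ otherwise. Splitting $S_<=A\sqcup B$ along this dichotomy, Olson's inequality in $G$ combined with $|S|\ge\mathsf{d}(G)-\delta$ yields $(p-1)|B|\le\delta$, that is $|B|\le\lfloor\delta/(p-1)\rfloor$, which controls the ``deeply divisible'' non-maximal part.

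The remaining obstacle, and the heart of the argument, is to bound $|A|$ together with the total $|S_<|=|A|+|B|$ sharply enough that the subtraction $p^{a_r-1}-1$ from $\mathsf{d}(H)$ and the refined floor term $\lfloor\delta/(s(p-1))\rfloor$ appear. Following the method introduced in \cite{Girard08}, I would iterate the height expansion, replacing each $g$ with $\alpha(g)=p^n$ by $p^n$ copies of an element of order $p^n\,\text{ord}(g)$; this produces a zero-sumfree sequence $S^*\subseteq G\setminus pG$ of length $\sum_g\alpha(g)\le\mathsf{d}(G)$. Monitoring the distribution of $S^*$ across the $p^s-1$ nonzero cosets of $pG$ in the top quotient $G/H\simeq\mathbb{F}_p^s$, whose Davenport constant is $s(p-1)+1$, should explain the denominator $s(p-1)$ of the floor term. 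The technical crux will be carrying out this accounting sharply enough to yield the precise integer parts stated in the theorem, rather than weaker estimates with denominator $p-1$ which would be insufficient once $s\ge 2$.
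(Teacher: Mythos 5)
There is a genuine gap: the part of the argument that must produce the floor term $\left\lfloor \delta/\bigl((r-j_0+1)(p-1)\bigr)\right\rfloor$ and the missing constant $p^{a_r-1}-1$ is never carried out. Writing $s=r-j_0+1$, you correctly observe that the naive bound $|S_{<}|\le\mathsf{d}(H)$ falls short by exactly these two quantities, and you correctly identify the quotient $G/H\simeq C_p^{s}$ with Davenport constant $s(p-1)+1$ as the source of the denominator $s(p-1)$; but the coset ``accounting'' is only announced (``should explain the denominator'', ``the technical crux will be carrying out this accounting''), not performed. Moreover, the step meant to set it up is false as stated: replacing an element $g$ with $\alpha(g)=p^{n}$ by $p^{n}$ copies of a lift $h$ with $g=p^{n}h$ need not yield a zero-sumfree sequence. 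For instance, $(3,1,1,1,1,1)$ is zero-sumfree in $C_9$, yet replacing $3$ by three copies of the lift $4$ gives $(4,4,4,1,1,1,1,1)$, which contains the zero-sum $4+4+1$; the inequality $\sum_i\alpha(g_i)\le\mathsf{d}(G)$ is Olson's theorem itself (Theorem \ref{Olson pour les p-groupes}) and is not obtained by such a substitution. Finally, the Olson-based bound $|B|\le\lfloor\delta/(p-1)\rfloor$ is correct but, as you note yourself, has the wrong denominator once $s\ge 2$ and does not combine with the $\mathsf{d}(H)$ bound to give the claimed estimate.

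For comparison, the paper obtains both missing quantities from a single counting inequality, namely Lemma \ref{lemme clef} applied with $d'=p$ and $d=p^{a_r}$ (so that $T=S_{<}$ and $U=S$): the elements of $S\setminus T$ are grouped into $\left\lfloor(|U|-|T|)/\mathsf{D}_{(p,p^{a_r})}(G)\right\rfloor$ disjoint blocks, each containing a subsequence whose sum lies in $H=G_{p^{a_r-1}}$, and these sums together with the elements of $T$ form a sequence in $H$ whose length cannot reach $\mathsf{D}(H)$ if $S$ is zero-sumfree. Since $\mathsf{D}_{(p,p^{a_r})}(G)=\mathsf{D}(C_p^{s})=s(p-1)+1$ by Proposition \ref{propmarrantegenerale} and Theorem \ref{Proprietes D et Cross number}, solving the resulting inequality for $|T|$ gives $|T|\le\mathsf{d}(H)-(p^{a_r-1}-1)+\left\lfloor\delta/(s(p-1))\right\rfloor$, and the theorem follows from $|S_{p^{a_r}}|=|S|-|T|$. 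This block decomposition is precisely the accounting your sketch defers; once it is made explicit, no appeal to Olson's theorem or to heights is needed for this result.
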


This lower bound improves significantly a previous result of W.~Gao and A.~Geroldinger (see Corollary $5.1.13$ in \cite{GeroKoch05}), stating that every zero-sumfree sequence with maximal possible length in a finite Abelian $p$-group contains at least \em one \em element of maximal order. Indeed, by specifying $\delta=0$ in Theorem \ref{main theorem}, one obtains the following corollary.


\begin{corollary}\label{corollaire} Let $G$ be a finite Abelian $p$-group. Then, every zero-sumfree sequence $S$ in $G$ with $\left| S \right|=\mathsf{d}(G)$ contains at least $\exp(G)-1$ elements of maximal order.  
\end{corollary}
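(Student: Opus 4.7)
The plan is to derive Corollary \ref{corollaire} as a direct specialization of Theorem \ref{main theorem} at the boundary value $\delta = 0$. First I would unpack the definition: $\Gamma_0(G)$ is the minimal number of elements of maximal order contained in a zero-sumfree sequence $S$ of length $|S| \geq \mathsf{d}(G)$. Since no zero-sumfree sequence in $G$ can exceed length $\mathsf{d}(G)$, the sequences contributing to this minimum are precisely those with $|S| = \mathsf{d}(G)$, so that $\Gamma_0(G)$ coincides exactly with the quantity controlled in the statement of the corollary.

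Next I would substitute $\delta = 0$ into the inequality furnished by Theorem \ref{main theorem}. The two correction terms $-\delta$ and $-\lfloor \delta / ((r-j_0+1)(p-1)) \rfloor$ then both vanish, and one is left with
$$\Gamma_0(G) \geq (p^{a_r}-1) + (r-j_0)(p-1)p^{a_r-1}.$$
Since $\exp(G) = p^{a_r}$ and the second summand is nonnegative (it vanishes precisely when $G$ possesses a unique cyclic factor of maximal order, i.e.\ when $j_0 = r$), this yields at once $\Gamma_0(G) \geq \exp(G) - 1$, which is the content of the corollary.

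Because the statement reduces to an evaluation of Theorem \ref{main theorem} at a boundary value of $\delta$, there is essentially no obstacle intrinsic to this deduction. The only minor point worth verifying is that the class of zero-sumfree sequences of length exactly $\mathsf{d}(G)$ is nonempty, which makes the conclusion meaningful; this is guaranteed by Theorem \ref{Proprietes D et Cross number}$(i)$ (the standard lower-bound construction realizes a zero-sumfree sequence of length $\mathsf{d}^*(G) = \mathsf{d}(G)$). All the substantive work therefore lies in establishing Theorem \ref{main theorem} itself, carried out via the general method summarized in Section \ref{Outline of the method}.
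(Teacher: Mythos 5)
Your proposal is correct and matches the paper exactly: the corollary is obtained by setting $\delta=0$ in Theorem \ref{main theorem}, noting that the term $(r-j_0)(p-1)p^{a_r-1}$ is nonnegative and that $\exp(G)=p^{a_r}$. The extra observation about nonemptiness of the class of zero-sumfree sequences of length $\mathsf{d}(G)$ is a harmless addition not present in the paper.
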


\medskip
In Section \ref{Proof of the main theorem} as well, we obtain a general upper bound for $\Gamma_{\delta}(G)$ in the case of finite Abelian $p$-groups (see Proposition \ref{majorant Gamma}), which, combined with the lower bound of Theorem \ref{main theorem}, implies the following result.

\begin{theorem}\label{main theorem 2} Let $p \in \mathcal{P}$, $r \in \mathbb{N}^*$ and
$1 \leq a_1 \leq \dots \leq a_{r-1} < a_r$, where $a_i \in \mathbb{N}$ for all $i \in \llbracket 1,r \rrbracket$. 
Then, for $G \simeq C_{p^{a_1}} \oplus \cdots \oplus C_{p^{a_r}}$ and $\delta \in \llbracket 0,\mathsf{d}(G)-1 \rrbracket$, we have: 
$$\Gamma_{\delta}(G)=\max\left(0,(p^{a_r}-1)-\delta-\left\lfloor \frac{\delta}{p-1} \right\rfloor\right).$$ 
\end{theorem}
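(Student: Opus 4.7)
The plan is to combine the lower bound of Theorem \ref{main theorem} with an explicit construction furnishing the matching upper bound. Since the hypothesis $a_{r-1}<a_r$ forces $j_0=r$, Theorem \ref{main theorem} applied to $G$ yields
$$\Gamma_{\delta}(G) \geq (p^{a_r}-1) - \delta - \left\lfloor \frac{\delta}{p-1} \right\rfloor,$$
which, together with the trivial inequality $\Gamma_{\delta}(G) \geq 0$, reproduces the $\max$ in the statement. It therefore remains, for each $\delta \in \llbracket 0,\mathsf{d}(G)-1 \rrbracket$, to construct a zero-sumfree sequence of length $\mathsf{d}(G)-\delta$ whose number of maximal-order elements is at most the right-hand side of the theorem.

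Fix a basis $(e_1,\dots,e_r)$ of $G$ with $\text{ord}_G(e_i)=p^{a_i}$, and set $m=\lfloor \delta/(p-1)\rfloor$, $s=\delta-m(p-1)$. When the formula is positive, i.e.\ $\delta < (p-1)p^{a_r-1}$, I would consider
$$S=e_1^{p^{a_1}-1}\cdots e_{r-1}^{p^{a_{r-1}}-1}\cdot e_r^{p^{a_r}-1-mp-s}\cdot (pe_r)^{m}.$$
A direct count gives $|S|=\mathsf{d}(G)-\delta$; the hypothesis $a_{r-1}<a_r$ ensures that the only elements of maximal order in $S$ are the $(p^{a_r}-1)-mp-s = (p^{a_r}-1)-\delta-\lfloor \delta/(p-1)\rfloor$ copies of $e_r$; and zero-sumfreeness follows because every subsum may be written as $\sum_{i<r} c_i e_i + (c_r'+pc_r'')e_r$ with $0 \leq c_i \leq p^{a_i}-1$, $0 \leq c_r' \leq p^{a_r}-1-mp-s$ and $0 \leq c_r'' \leq m$, so that $c_r'+pc_r'' \leq p^{a_r}-1-s < p^{a_r}$, forcing all coefficients to vanish as soon as the subsum is zero.

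When the formula is non-positive, i.e.\ $\delta \geq (p-1)p^{a_r-1}$, the construction above breaks down and I would instead work inside the subgroup $G_{p^{a_r-1}} \simeq C_{p^{a_1}} \oplus \cdots \oplus C_{p^{a_{r-1}}} \oplus C_{p^{a_r-1}}$, whose Davenport constant equals $\mathsf{d}(G)-(p-1)p^{a_r-1} \geq \mathsf{d}(G)-\delta$ by Theorem \ref{Proprietes D et Cross number}$(i)$. Any sub-sequence of length $\mathsf{d}(G)-\delta$ of a maximal zero-sumfree sequence in this subgroup contains no element of order $p^{a_r}$, yielding $\Gamma_{\delta}(G)=0$ and matching the $\max$. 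The main obstacle I expect is the zero-sum verification for $S$; once that is settled, the arithmetic bookkeeping simply records that each replacement of $p$ copies of $e_r$ by a single copy of $pe_r$ trades a length-loss of $p-1$ against the removal of $p$ maximal-order elements, which is precisely what the floor $\lfloor \delta/(p-1)\rfloor$ in the statement encodes.
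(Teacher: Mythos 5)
Your proposal is correct and follows essentially the same route as the paper: the lower bound is Theorem \ref{main theorem} with $j_0=r$, and your explicit sequences $e_1^{p^{a_1}-1}\cdots e_{r-1}^{p^{a_{r-1}}-1}e_r^{p^{a_r}-1-mp-s}(pe_r)^{m}$ and, in the range $\delta\geq(p-1)p^{a_r-1}$, a zero-sumfree sequence supported in $G_{p^{a_r-1}}$, are exactly the constructions of Proposition \ref{majorant Gamma} specialized to $j_0=r$. The only cosmetic difference is that you build the upper bound directly rather than invoking the general Proposition \ref{majorant Gamma} and its function $f(\delta)$, which lets you avoid the paper's three-case discussion.
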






In Section \ref{Proof of the main theorem bis}, we study the following general conjecture, which bears upon the greatest common divisor of the orders of the elements within a long zero-sumfree sequence in a finite Abelian group.

\begin{conjecture}\label{Conjecture ordre minimal d'un element d'une long zero-sumfree sequence}
Let $G \simeq C_{n_1} \oplus \dots \oplus C_{n_r},$ with $1 < n_1 \text{ } |\text{ } \dots \text{ }|\text{ } n_r \in \mathbb{N}$, be a finite Abelian group. Given a zero-sumfree sequence $S$ in $G$ verifying $\left| S \right| \geq \mathsf{d}^*(G),$ one has for all $g \in S$:
$$n_1 \text{ } | \text{ } \text{\em ord\em}_G(g).$$
\end{conjecture}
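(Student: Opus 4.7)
The plan is to reduce the problem to finite Abelian $p$-groups and then apply Olson's height theorem (Theorem~\ref{Olson pour les p-groupes}). For a $p$-group $G \simeq C_{p^{a_1}} \oplus \cdots \oplus C_{p^{a_r}}$ with $a_1 \leq \cdots \leq a_r$, I have $n_1 = p^{a_1}$ and $\mathsf{d}(G) = \mathsf{d}^{*}(G)$ by Theorem~\ref{Proprietes D et Cross number}$(i)$, so any $S$ satisfying the hypothesis has length exactly $\mathsf{d}(G)$. Arguing by contradiction, suppose some $g_0 \in S$ has order $p^b$ with $b < a_1$. Writing $g_0 = [g_{0,1}, \ldots, g_{0,r}]$ in the fixed decomposition, each coordinate has $p$-order at most $b$, and since $a_i \geq a_1 > b$ one has $g_{0,i} \in p^{a_i - b} C_{p^{a_i}} \subseteq p^{a_1 - b} C_{p^{a_i}}$. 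Hence $g_0 \in p^{a_1 - b} G$ and $\alpha(g_0) \geq p^{a_1 - b} \geq p$. Combining this with $\alpha(g) \geq 1$ for the remaining terms and applying Theorem~\ref{Olson pour les p-groupes} to the zero-sumfree sequence $S$ yields
$$
\mathsf{d}(G) \;\geq\; \sum_{g \in S} \alpha(g) \;\geq\; |S| + (p-1),
$$
contradicting $|S| = \mathsf{d}(G)$.

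For a general finite Abelian group $G$, the natural next move is to fix a prime $p \mid n_1$ and an element $g_0 \in S$ with $v_p(\text{ord}_G(g_0)) < v_p(n_1)$, and to apply the $p$-group case above to the image $\pi_p(S)$ inside the $p$-primary component $G_p$ (whose smallest invariant factor is precisely $p^{v_p(n_1)}$). The hard part will be that $\pi_p(S)$ need not be zero-sumfree in $G_p$: a subsum of $S$ may have vanishing $p$-part without being zero in $G$, since its $p'$-part may cancel on its own. A plausible remedy is to extract greedily, from $\pi_p(S)$, a zero-sumfree subsequence $T \subseteq G_p$ of length at least $\mathsf{d}^{*}(G_p)$ still containing $\pi_p(g_0)$; the length budget allows this because $|S| \geq \mathsf{d}^{*}(G)$, which is typically far bigger than $\mathsf{d}^{*}(G_p)$. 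The delicate step will then be to guarantee simultaneously, across all primes $p \mid n_1$, that the elements discarded from each $\pi_p(S)$ cannot reassemble into a genuine zero-sum inside $S$. This coupling between the primary components is, in my view, precisely why Conjecture~\ref{Conjecture ordre minimal d'un element d'une long zero-sumfree sequence} is essentially immediate for $p$-groups via Olson's theorem while remaining open in general.
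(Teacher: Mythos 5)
Your $p$-group argument is correct and is essentially the paper's own: the paper first shows (Lemma~\ref{heights of a long zero-sumfree sequence}, via Olson's Theorem~\ref{Olson pour les p-groupes}) that every element of such a sequence has height $1$ and then reads off $n_1 \mid \mathrm{ord}_G(g)$ in Theorem~\ref{main theorem bis}, while you run the contrapositive (order $<n_1$ forces height $\geq p$); the two computations are the same, and yours in fact already works under the paper's weaker hypothesis $\left|S\right| \geq \mathsf{d}(G)-p+2$. For general $G$ the statement is only a conjecture in the paper (Theorem~\ref{theorem recapitulatif} lists the known cases), so your honest identification of the obstruction --- the projections to the primary components need not remain zero-sumfree --- matches the actual state of affairs rather than revealing a gap.
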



Conjecture \ref{Conjecture ordre minimal d'un element d'une long zero-sumfree sequence} is known to be true in the trivial case of finite cyclic groups. This conjecture also holds for finite Abelian groups of rank two (see Proposition $6.3.1$ in \cite{GaoGero99}), and we shall prove in Section \ref{Proof of the main theorem bis} that it holds for finite Abelian $p$-groups too, which is Statement $(i)$ in the following theorem. Statement $(iv)$ can be easily deduced from Theorem $3.1$ in \cite{Schmid09}.

\begin{theorem} \label{theorem recapitulatif} Conjecture \ref{Conjecture ordre minimal d'un element d'une long zero-sumfree sequence} holds whenever:
\begin{itemize}
\item[$(i)$] $G$ is a finite Abelian $p$-group.

\item[$(ii)$] $G$ is a finite cyclic group.

\item[$(iii)$] $G$ is a finite Abelian group of rank two.

\item[$(iv)$] $G \simeq C_2 \oplus C_2 \oplus C_{2n}$, where $n \in \mathbb{N}^*$.
\end{itemize}
\end{theorem}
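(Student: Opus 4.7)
The only case requiring new argument is statement (i); cases (ii), (iii) and (iv) are attributed to the trivial cyclic case, to Gao--Geroldinger, and to Schmid respectively, so I focus my plan on the $p$-group case. Write $G \simeq C_{p^{a_1}}\oplus\cdots\oplus C_{p^{a_r}}$ with $1\le a_1\le\cdots\le a_r$, so that $n_1=p^{a_1}$, and invoke Theorem \ref{Proprietes D et Cross number} (i) to replace $\mathsf{d}^*(G)$ by $\mathsf{d}(G)$ in the hypothesis. Since the orders of elements of $G$ are powers of $p$, the divisibility $n_1 \mid \operatorname{ord}_G(g)$ means exactly $\operatorname{ord}_G(g)\ge p^{a_1}$.

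My plan is to argue by contradiction: suppose $S$ is zero-sumfree with $|S|\ge \mathsf{d}(G)$ and that some $g\in S$ satisfies $\operatorname{ord}_G(g)=p^{b}$ with $b<a_1$. The heart of the argument is the following observation on the height: any such $g$ automatically lies in $pG$, so that $\alpha(g)\ge p$. Indeed, writing $g=[b_1,\dots,b_r]$ with $b_i\in C_{p^{a_i}}$, the relation $p^{b}g=0$ forces $p^{a_i-b}\mid b_i$ in each coordinate, and because $a_i\ge a_1 > b$ we get $p\mid b_i$ for every~$i$; hence $g\in pG$, i.e.\ $\alpha(g)\ge p$.

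Combining this with the trivial bound $\alpha(h)\ge 1$ for every other $h\in S$ gives
$$\sum_{h\in S}\alpha(h)\;\ge\;(|S|-1)+p\;\ge\;\mathsf{d}(G)+p-1\;>\;\mathsf{d}(G),$$
so Olson's Theorem \ref{Olson pour les p-groupes} contradicts the zero-sumfreeness of $S$. This proves statement (i). There is no serious obstacle here: the only quantitative input is that a single element of height at least $p$ suffices to push the height sum strictly past $\mathsf{d}(G)$, and the height jump is forced automatically by the hypothesis $b<a_1$. For completeness I would then recall, in one line each, why the remaining statements follow: (ii) is immediate since in a cyclic group every element of a zero-sumfree sequence is nonzero; (iii) is Proposition~$6.3.1$ of \cite{GaoGero99}; and (iv) is obtained by specializing Theorem~$3.1$ of \cite{Schmid09} to the group $C_2\oplus C_2\oplus C_{2n}$.
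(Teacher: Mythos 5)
Your argument for statement $(i)$ is correct and is essentially the paper's own proof in contrapositive form: the paper first shows (Lemma \ref{heights of a long zero-sumfree sequence}, via Theorem \ref{Olson pour les p-groupes}) that every element of a zero-sumfree sequence of length at least $\mathsf{d}(G)-p+2$ has height $1$, and then deduces in Theorem \ref{main theorem bis} that such an element has order at least $n_1$; you instead assume an element of order $p^b$ with $b<a_1$, observe it must have height at least $p$, and feed that into Olson's theorem for the same contradiction. The only flaw is your one-line justification of $(ii)$: in $C_n$ the conjecture asserts that every element of a zero-sumfree sequence of length $n-1$ has order exactly $n$, and ``nonzero'' does not imply ``generator'' --- what is actually needed is the classical inverse result that such a sequence is $g^{n-1}$ for a generator $g$, which is the known fact the paper is implicitly citing.
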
 

Finally, in Section \ref{concluding remark}, we propose and discuss one general conjecture concerning the behaviour of $\Gamma_{\delta}(G)$, when $G$ is a finite Abelian $p$-group.

\section{Outline of the method}
\label{Outline of the method}
Let $G$ be a finite Abelian group, and let $S$ be a sequence of elements in $G$. The general method that we will use in this paper (see also \cite{Girard08} and \cite{Girard09} for applications of this method in two other contexts), consists in considering, for every $d',d \in \mathbb{N}$ such that $1 \leq d' \text{ } | \text{ } d \text{ } | \text{ } \exp(G)$, the following exact sequence:
$$0 \rightarrow G_{d/d'} \hookrightarrow G_d \overset{\pi_{(d',d)}}\rightarrow \frac{G_d}{G_{d/d'}} \rightarrow 0.$$
Now, let $U$ be the subsequence of $S$ consisting of all the elements whose order divides $d$. If, for some $1 \leq d' \text{ } | \text{ } d \text{ } | \text{ } \exp(G)$, it is possible to find sufficiently many disjoint non-empty zero-sum subsequences in $\pi_{(d',d)}(U)$, that is to say sufficiently many disjoint subsequences in $U$ the sum of which are elements of order dividing $d/d'$, then $S$ cannot be a zero-sumfree sequence in $G$.

\medskip
So as to make this idea more precise, we proposed in \cite{Girard08} to introduce the following number, which can be seen as an extension of the classical Davenport constant. 

\medskip
Let $G \simeq C_{n_1} \oplus \dots \oplus C_{n_r},$ with $1 < n_1 \text{ } |\text{ } \dots \text{ }|\text{ } n_r \in \mathbb{N}$, be a finite Abelian group and $d',d \in \mathbb{N}$ be two integers such that $1 \leq d' \text{ } | \text{ } d \text{ } | \text{ } \exp(G)$. By $\mathsf{D}_{(d',d)}(G)$ we denote the smallest $t \in \mathbb{N}^*$ such that every sequence $S$ in $G_d$ with $|S| \geq t$ contains a non-empty subsequence of sum in $G_{d/d'}$. 

\medskip
Using this definition, we can prove the following simple lemma, which is one possible illustration of our idea. This result will be useful in Section \ref{Proof of the main theorem} and states that given a finite Abelian group $G$, there exist strong constraints on the way the orders of elements have to be distributed within a zero-sumfree sequence. 

\begin{lem}\label{lemme clef} Let $G$ be a finite Abelian group and $d',d \in \mathbb{N}$ be two integers such that $1 \leq d' \text{ } | \text{ } d \text{ } | \text{ } \exp(G)$. Given a sequence $S$ of elements in $G$, we will write $T$ for the subsequence of $S$ consisting of all the elements whose order divides $d/d'$, and we will write $U$ for the subsequence of $S$ consisting of all the elements whose order divides $d$ (In particular, one has $T \subseteq U$).  
Then, the following condition implies that $S$ cannot be a zero-sumfree sequence:
$$\left|T\right|+\left\lfloor\frac{\left|U\right|-\left|T\right|}{\mathsf{D}_{(d',d)}(G)}\right\rfloor \geq \mathsf{D}_{\left(\frac{d}{d'},\frac{d}{d'}\right)}(G).$$
\end{lem}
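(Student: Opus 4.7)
The plan is to apply the general strategy outlined above to the subsequence $U$, viewed as a sequence in $G_d$. Set $m := \mathsf{D}_{(d',d)}(G)$. By the defining property of $m$, every length-$m$ sequence of elements in $G_d$ contains a non-empty subsequence whose sum lies in $G_{d/d'}$, and such an extracted block has length at most $m$.

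First I would greedily peel off disjoint non-empty blocks $X_1,\ldots,X_N$ from $U \setminus T$ whose sums $\sigma_i := \sum X_i$ all belong to $G_{d/d'}$. Start with $V_0 := U \setminus T$, and while the current $V_i$ satisfies $|V_i| \geq m$, fix any length-$m$ sub-block of $V_i$, apply the definition of $\mathsf{D}_{(d',d)}(G)$ to produce inside it a non-empty sub-subsequence $X_{i+1}$ (necessarily of length at most $m$) with sum in $G_{d/d'}$, and set $V_{i+1} := V_i \setminus X_{i+1}$. Since each peeled block has size at most $m$, the loop runs at least $\lfloor |V_0|/m \rfloor$ times, yielding
\[
N \;\geq\; \left\lfloor \frac{|U|-|T|}{\mathsf{D}_{(d',d)}(G)} \right\rfloor.
\]

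Next I would assemble the auxiliary sequence $S' := T \cdot (\sigma_1) \cdots (\sigma_N)$. It lies entirely in $G_{d/d'}$, since every element of $T$ has order dividing $d/d'$ and each $\sigma_i$ was forced into $G_{d/d'}$ by construction. Its length is $|T|+N$, which by the hypothesis of the lemma is at least $\mathsf{D}_{(d/d',d/d')}(G)$. Unwinding the definition, $G_{(d/d')/(d/d')} = G_1 = \{0\}$, so $\mathsf{D}_{(d/d',d/d')}(G)$ is nothing other than the classical Davenport constant $\mathsf{D}(G_{d/d'})$. Consequently $S'$ contains a non-empty zero-sum subsequence. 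Pulling this back to $S$ (replacing each selected $\sigma_i$ by the block $X_i \subseteq U \setminus T$ and keeping selected elements of $T$ as themselves) produces a non-empty subsequence of $S$ of sum zero, contradicting the assumption that $S$ is zero-sumfree.

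The main obstacle, modest but worth isolating, is the counting in the greedy extraction: one must ensure that the loop yields at least $\lfloor (|U|-|T|)/m \rfloor$ disjoint blocks. The key is to invoke $\mathsf{D}_{(d',d)}(G)$ only on a pre-chosen sub-block of size exactly $m$, which caps $|X_i|\leq m$ and so forces a linear lower bound on the number of iterations. Everything else amounts to straightforward bookkeeping between $G_d$, $G_{d/d'}$, and $S$.
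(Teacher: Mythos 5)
Your proof is correct and takes essentially the same route as the paper's (much terser) argument: the paper likewise treats the elements of $T$ as singleton blocks, extracts $\left\lfloor(\left|U\right|-\left|T\right|)/\mathsf{D}_{(d',d)}(G)\right\rfloor$ further disjoint subsequences of $U$ with sums of order dividing $d/d'$, and then applies the definition of $\mathsf{D}_{\left(\frac{d}{d'},\frac{d}{d'}\right)}(G)$ to the resulting collection. Your greedy peeling argument and the observation that $\mathsf{D}_{\left(\frac{d}{d'},\frac{d}{d'}\right)}(G)$ is just the Davenport constant of $G_{d/d'}$ merely spell out details the paper leaves implicit.
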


\begin{proof} Let us set $\mathsf{\Delta}=\mathsf{D}_{\left(\frac{d}{d'},\frac{d}{d'}\right)}(G)$. When it holds, this inequality implies that there are $\mathsf{\Delta}$ disjoint subsequences $S_1,\dots,S_{\mathsf{\Delta}}$ of $S$, the sum of which are elements of order dividing $d/d'$. Now, by the very definition of $\mathsf{D}_{\left(\frac{d}{d'},\frac{d}{d'}\right)}(G)$, $S$ has to contain a non-empty zero-sum subsequence. 
\end{proof}

\medskip
Now, in order to obtain effective inequalities from the symbolic constraints of Lemma \ref{lemme clef}, one can use a result proved in \cite{Girard08}, which states that for any finite Abelian group $G$ and every $1 \leq d' \text{ } | \text{ } d \text{ } | \text{ } \exp(G)$, the invariant $\mathsf{D}_{(d',d)}(G)$ is linked with the classical Davenport constant of a particular subgroup of $G$, which can be characterized explicitly. In order to define properly this particular subgroup, we have to introduce the following notation. 

\medskip
For all $i \in \llbracket 1,r \rrbracket$, we set:
$$A_i=\gcd(d',n_i), \text{ }
B_i=\frac{\mathrm{lcm}(d,n_i)}{\mathrm{lcm}(d',n_i)}$$

$$\text{ and } \text{} \upsilon_i(d',d)=\frac{A_i}{\gcd(A_i,B_i)}.$$

\medskip
For instance, whenever $d$ divides $n_i$, we have $\upsilon_i(d',d)=\gcd(d',n_i)=d'$, and in particular $\upsilon_r(d',d)=d'.$ We can now state our result on $\mathsf{D}_{(d',d)}(G)$ (see \cite{Girard08}, Proposition $3.1$).

\begin{proposition} \label{propmarrantegenerale} Let $G \simeq C_{n_1} \oplus \dots \oplus C_{n_r}$, with $1 < n_1
\text{ } |\text{ } \dots \text{ }|\text{ } n_r \in \mathbb{N}$, be a
finite Abelian group and $d',d \in \mathbb{N}$ be such that $1 \leq d' \text{ } | \text{ } d \text{ } | \text{ } \exp(G)$. Then, we have the following equality:
$$\mathsf{D}_{(d',d)}(G)=\mathsf{D}\left(C_{\upsilon_1(d',d)} \oplus
\dots \oplus C_{\upsilon_r(d',d)}\right).$$
\end{proposition}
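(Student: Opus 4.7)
The plan is to reduce the computation of $\mathsf{D}_{(d',d)}(G)$ to an ordinary Davenport constant by passing to the quotient group $H=G_d/G_{d/d'}$. Consider the canonical projection $\pi_{(d',d)}:G_d\to H$ coming from the short exact sequence displayed at the start of Section \ref{Outline of the method}. A sequence $S$ in $G_d$ admits a non-empty subsequence with sum in $G_{d/d'}$ if and only if the sequence $\pi_{(d',d)}(S)$ admits a non-empty zero-sum subsequence in $H$; conversely, since $\pi_{(d',d)}$ is surjective, every sequence in $H$ of length $\ell$ can be lifted to a sequence in $G_d$ of the same length. Comparing the defining properties of the two constants therefore yields $\mathsf{D}_{(d',d)}(G)=\mathsf{D}(H)$, so the problem reduces to exhibiting a cyclic decomposition of $H$.

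Next I would identify $H$ componentwise along the decomposition $G\simeq C_{n_1}\oplus\dots\oplus C_{n_r}$. For each index $i$, the subgroup of $C_{n_i}$ consisting of elements annihilated by $d$ is cyclic of order $\gcd(d,n_i)$, and similarly the subgroup annihilated by $d/d'$ is cyclic of order $\gcd(d/d',n_i)$. Since the latter sits inside the former within the ambient cyclic group $C_{n_i}$, the quotient is itself cyclic, of order $\gcd(d,n_i)/\gcd(d/d',n_i)$. Taking the direct sum over $i$ produces the isomorphism
$$H\;\simeq\;\bigoplus_{i=1}^{r} C_{\gcd(d,n_i)/\gcd(d/d',n_i)}.$$

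The remaining step, which I expect to be the main technical obstacle, is the equality $\upsilon_i(d',d)=\gcd(d,n_i)/\gcd(d/d',n_i)$ for every $i\in\llbracket 1,r\rrbracket$. This is a purely multiplicative statement, so I would check it one prime at a time: writing $\alpha$, $\beta'$, $\beta$ for the $q$-adic valuations of $n_i$, $d'$, $d$ (with $\beta'\leq\beta$), the right-hand side has $q$-valuation $\min(\beta,\alpha)-\min(\beta-\beta',\alpha)$, whereas $A_i=\gcd(d',n_i)$ has $q$-valuation $\min(\beta',\alpha)$ and $B_i=\mathrm{lcm}(d,n_i)/\mathrm{lcm}(d',n_i)$ has $q$-valuation $\max(\beta,\alpha)-\max(\beta',\alpha)$. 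A short case analysis on the relative position of $\alpha$ with respect to $\beta'$ and $\beta$ shows that $\upsilon_i(d',d)=A_i/\gcd(A_i,B_i)$ has the same $q$-valuation in every case. Once this identity is secured, the invariance of $\mathsf{D}$ under group isomorphism yields the announced formula.
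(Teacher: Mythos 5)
The paper itself gives no proof of Proposition \ref{propmarrantegenerale}: it is imported from \cite{Girard08} (Proposition $3.1$), so there is no in-paper argument to compare against. Your proof is correct and complete, and follows the natural (and, as far as I can tell, the original) route: the reduction $\mathsf{D}_{(d',d)}(G)=\mathsf{D}(G_d/G_{d/d'})$ is sound in both directions (push forward by $\pi_{(d',d)}$ for the upper bound, lift an extremal zero-sumfree sequence for the lower bound), the quotient splits componentwise because $G_d$ and $G_{d/d'}$ decompose along the given cyclic factors with $(C_{n_i})_{d/d'}\subseteq (C_{n_i})_d$, and the identity $\upsilon_i(d',d)=\gcd(d,n_i)/\gcd(d/d',n_i)$ does hold; I verified the $q$-adic case analysis ($\alpha\le\beta'$, $\beta'<\alpha\le\beta$, $\alpha>\beta$) and both sides agree in each case, consistently with the paper's remark that $\upsilon_i(d',d)=d'$ whenever $d\mid n_i$.
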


\section{On the quantity $\Gamma_{\delta}(G)$ for finite Abelian $p$-groups }
\label{Proof of the main theorem}
In this section, we will show how the method presented in Section \ref{Outline of the method} can be used in order to study the minimal number of elements of maximal order within a zero-sumfree sequence in a finite Abelian $p$-group. First, we prove Theorem \ref{main theorem}, which, given a finite Abelian $p$-group $G$ and an integer $\delta \in \llbracket 0,\mathsf{d}(G)-1 \rrbracket$, gives a lower bound for the number $\Gamma_{\delta}(G)$.

\begin{proof}[Proof of Theorem \ref{main theorem}]
Let $S$ be a zero-sumfree sequence in $G \simeq C_{p^{a_1}} \oplus \cdots \oplus C_{p^{a_r}}$, with $\left|S\right| \geq \mathsf{d}(G)-\delta$. We set $d'=p$ and $d=p^{a_r}$, which leads to $d/d'=p^{a_r-1}$. Let also $T$ and $U$ be the two subsequences of $S$ which are defined in Lemma \ref{lemme clef}. In particular, one has $T \subseteq U=S$.

\medskip
To start with, we determine the exact value of $\mathsf{D}_{(d',d)}(G)$. One has, for every $i \in \llbracket 1,r \rrbracket$:
\begin{eqnarray*}
\upsilon_i(d',d) & = & \frac{p}{\gcd\left(p,\frac{p^{a_r}}{p^{a_i}}\right)}\\
                 & = & \begin{cases}
                       1 \text{ if } i < j_0,\\
                       p \text{ if } i \geq j_0. 
\end{cases}
\end{eqnarray*}
Therefore, using Proposition \ref{propmarrantegenerale} and Theorem \ref{Proprietes D et Cross number} $(i)$, we obtain:
\begin{eqnarray*}
\mathsf{D}_{(d',d)}(G) & = & \mathsf{D}\left(C_{\upsilon_1(d',d)} \oplus
\dots \oplus C_{\upsilon_r(d',d)}\right)\\
                       & = & \mathsf{D}(C^{r-j_0+1}_p)\\
                       & = & (r-j_0+1)(p-1)+1.
\end{eqnarray*}
Now, let us set, for all $i \in \llbracket 1,r \rrbracket$:
$$\beta_i=
\begin{cases}
a_i \hspace{0.75cm} \text{ if } i < j_0,\\
a_r-1 \text{ if } i \geq j_0. 
\end{cases}
$$
If we had the following inequality:
$$\left|T\right| > \displaystyle\sum^{r-1}_{i=1}(p^{\beta_i}-1)+\frac{\delta}{\left(r-j_0+1\right)(p-1)},$$
then it would imply that
\begin{eqnarray*}
\left|T\right|+\frac{\left|U\right|-\left|T\right|}{\mathsf{D}_{(d',d)}(G)} & \geq & \left|T\right|+\frac{\displaystyle\sum^{r}_{i=1}(p^{a_i}-1)-\delta-\left|T\right|}{(r-j_0+1)(p-1)+1}\\ 
& > & \displaystyle\sum^{r-1}_{i=1}(p^{\beta_i}-1)+\frac{\displaystyle\sum^{r}_{i=1}(p^{a_i}-1)-\displaystyle\sum^{r-1}_{i=1}(p^{\beta_i}-1)}{(r-j_0+1)(p-1)+1}\\
& = & \displaystyle\sum^{r-1}_{i=1}(p^{\beta_i}-1)+\frac{(p^{a_r}-1)+(r-j_0)(p^{a_r}-p^{a_r-1})}{(r-j_0+1)(p-1)+1}\\
& = & \displaystyle\sum^{r-1}_{i=1}(p^{\beta_i}-1)+\frac{\left((r-j_0+1)(p-1)+1\right)p^{a_r-1}-1}{(r-j_0+1)(p-1)+1}\\
& = & \displaystyle\sum^{r}_{i=1}(p^{\beta_i}-1)+1-\frac{1}{(r-j_0+1)(p-1)+1}\\
& = & \mathsf{D}_{\left(\frac{d}{d'},\frac{d}{d'}\right)}(G)-\frac{1}{\mathsf{D}_{(d',d)}(G)},
\end{eqnarray*}
and, according to Lemma \ref{lemme clef}, $S$ would contain a non-empty zero-sum subsequence, which is a contradiction.
Thus, one obtains
$$\left|T\right| \leq \displaystyle\sum^{r-1}_{i=1}(p^{\beta_i}-1)+\left\lfloor \frac{\delta}{\left(r-j_0+1\right)(p-1)} \right\rfloor,$$
which gives the following lower bound for the number of elements of maximal order contained in $S$: 
\begin{eqnarray*}
\left|S_{p^{a_r}}\right| & = & \left|S\right|-\left|T\right|\\
                         & \geq & \displaystyle\sum^r_{i=1}(p^{a_i}-1)-\delta-\displaystyle\sum^{r-1}_{i=1}(p^{\beta_i}-1) - \left\lfloor \frac{\delta}{\left(r-j_0+1\right)(p-1)} \right\rfloor\\
                         & = & (r-j_0+1)\left(p^{a_r}-1\right)-(r-j_0)\left(p^{a_r-1}-1\right)-\delta-\left\lfloor \frac{\delta}{\left(r-j_0+1\right)(p-1)} \right\rfloor\\
                         & = & (p^{a_r}-1)+(r-j_0)(p-1)p^{a_r-1}-\delta-\left\lfloor \frac{\delta}{\left(r-j_0+1\right)(p-1)}\right\rfloor,
\end{eqnarray*}
and the proof is complete.
\end{proof}

Given a finite Abelian $p$-group $G$ and an integer $\delta \in \llbracket 0,\mathsf{d}(G)-1 \rrbracket$, we can also obtain, using some explicit constructions, an upper bound for the number $\Gamma_{\delta}(G)$.

\begin{proposition}\label{majorant Gamma} Let $G \simeq C_{p^{a_1}} \oplus \cdots \oplus C_{p^{a_r}}$, with $p \in \mathcal{P}$, $r \in \mathbb{N}^*$ and $1 \leq a_1 \leq \dots \leq a_r$, with $a_i \in \mathbb{N}$ for all $i \in \llbracket 1,r \rrbracket$. Let also $\delta \in \llbracket 0,\mathsf{d}(G)-1 \rrbracket$ and $j_0=\min \{i \in \llbracket 1,r \rrbracket \text{ } | \text{ } a_i=a_r\}$. Then, one has 
$$\Gamma_{\delta}(G) \leq \max\left(0,(r-j_0+1)\left(p^{a_r}-1\right)-\delta-f(\delta)\right),$$
where
$$f(\delta)=\min\left(\left\lfloor \frac{\delta}{p-1} \right\rfloor,(r-j_0+1)\left(p^{a_r-1}-1\right)\right).$$ 
\end{proposition}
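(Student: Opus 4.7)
The plan is to establish the upper bound by explicit construction: for each admissible $\delta$, I will exhibit a zero-sumfree sequence in $G$ of length $\mathsf{d}(G) - \delta$ whose number of maximal-order elements attains the claimed value. I would fix a basis $(e_1,\dots,e_r)$ of $G$ with $\text{ord}_G(e_i) = p^{a_i}$; the canonical maximal zero-sumfree sequence $\prod_{i=1}^r e_i^{p^{a_i}-1}$ has length $\mathsf{d}(G)$ and contains exactly $(r-j_0+1)(p^{a_r}-1)$ elements of maximal order.

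The main combinatorial move is a \emph{swap}: for each $k \in \llbracket j_0, r \rrbracket$ and each integer $j_k$ with $0 \le j_k \le p^{a_r-1}-1$, replace $pj_k$ copies of $e_k$ by $j_k$ copies of $pe_k$. This decreases the length by $(p-1)j_k$ and the number of maximal-order elements by $pj_k$. I would set $j := f(\delta)$, decompose $j = \sum_{k=j_0}^r j_k$ with $0 \le j_k \le p^{a_r-1}-1$ (legal because $f(\delta) \le (r-j_0+1)(p^{a_r-1}-1)$ by definition), and form
\begin{equation*}
S \;=\; \prod_{i=1}^{j_0-1} e_i^{p^{a_i}-1} \;\cdot\; \prod_{k=j_0}^{r} e_k^{p^{a_r}-1-pj_k} (pe_k)^{j_k}.
\end{equation*}
This sequence has length $\mathsf{d}(G) - (p-1)j \ge \mathsf{d}(G) - \delta$ (since $f(\delta) \le \lfloor \delta/(p-1) \rfloor$) and contains $(r-j_0+1)(p^{a_r}-1) - pj$ elements of maximal order. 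I would then delete $\delta - (p-1)j$ further elements to reach length $\mathsf{d}(G) - \delta$, removing maximal-order ones first and switching to lower-order ones only once those are exhausted; deletion preserves zero-sumfreeness automatically.

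The main step requiring verification is that $S$ is itself zero-sumfree, and this is the principal (if routine) obstacle. For any hypothetical non-trivial zero subsum, writing the chosen multiplicities as $c_i$ of $e_i$ for $i < j_0$ and $c_k$ of $e_k$, $m_k$ of $pe_k$ for $k \ge j_0$, the coefficient along $e_i$ ($i < j_0$) lies in $\llbracket 0, p^{a_i}-1 \rrbracket$ and must vanish modulo $p^{a_i}$, forcing $c_i=0$. Along $e_k$ ($k \ge j_0$) the coefficient $c_k + p m_k$ lies in $\llbracket 0, p^{a_r}-1 \rrbracket$ and must vanish modulo $p^{a_r}$, forcing $c_k + p m_k = 0$ and hence $c_k = m_k = 0$. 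This coordinatewise check is the heart of the construction.

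A final bookkeeping step identifies the surviving maximal-order multiplicity as $\max\bigl(0, (r-j_0+1)(p^{a_r}-1) - \delta - j\bigr)$, which coincides with the stated bound. The $\max(0,\cdot)$ appears precisely when the $\delta - (p-1)j$ deletions overflow the maximal-order supply of $S$ and must spill into the non-maximal part, whose size is $\sum_{i=1}^{j_0-1}(p^{a_i}-1) + j$; verifying that this spillover is always feasible reduces to the hypothesis $\delta \le \mathsf{d}(G) - 1$.
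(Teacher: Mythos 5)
Your construction is correct and is essentially the paper's own approach: both exhibit explicit zero-sumfree sequences built from the blocks $e_i$ and $pe_i$, trading $p$ copies of $e_k$ ($k\geq j_0$) for one copy of $pe_k$ so as to lose $p$ elements of maximal order while shortening the sequence by only $p-1$, and both verify zero-sumfreeness by the same coordinatewise counting. The only difference is organizational: the paper splits into three cases according to the size of $\delta$ and builds a sequence of exact length $\mathsf{d}(G)-\delta$ in each, whereas you perform all $f(\delta)$ swaps at once and reach the exact length by a final deletion, which yields the same count $\max\left(0,(r-j_0+1)\left(p^{a_r}-1\right)-\delta-f(\delta)\right)$.
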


\begin{proof} Let $(e_1, \dots, e_r)$ be a basis of $G$, with $\text{ord}(e_i)=p^{a_i}$ for every $i \in \llbracket 1,r \rrbracket$. One can distinguish the three following cases.

\medskip 
$\textbf{Case 1.}$ If $0 \leq \delta < (r-j_0+1)(p-1)\left(p^{a_r-1}-1\right)$, then let us write
$$\delta=\delta_1(p-1)\left(p^{a_r-1}-1\right)+\delta_2, \text{ with } \delta_1 \in \llbracket 0,(r-j_0)\rrbracket \text{ and } \delta_2 \in \llbracket 0,(p-1)\left(p^{a_r-1}-1\right)-1 \rrbracket.$$
Thus, the sequence  
$$S=\left(\displaystyle\prod^{r-\delta_1-1}_{i=1}e^{p^{a_i}-1}_i\right)\left(\displaystyle\prod^{r-1}_{i=r-\delta_1}\left(e_i\right)^{p-1}\left(pe_i\right)^{p^{a_i-1}-1}\right) \left(e_{r}\right)^{p^{a_r}-1-\delta_2-\left\lfloor \frac{\delta_2}{p-1} \right\rfloor}\left(pe_{r}\right)^{\left\lfloor \frac{\delta_2}{p-1} \right\rfloor}$$
is a zero-sumfree sequence in $G$. On the one hand, since $\delta_1 \leq (r-j_0)$, one obtains
\begin{eqnarray*}
\left|S\right| & = & \displaystyle\sum^{r-\delta_1-1}_{i=1}\left(p^{a_i}-1\right)+\displaystyle\sum^{r-1}_{i=r-\delta_1}\left[(p-1)+\left(p^{a_r-1}-1\right)\right]+\left(p^{a_r}-1\right)-\delta_2-\left\lfloor \frac{\delta_2}{p-1} \right\rfloor+\left\lfloor \frac{\delta_2}{p-1} \right\rfloor\\
& = & \displaystyle\sum^{r}_{i=1}\left(p^{a_i}-1\right)+\displaystyle\sum^{r-1}_{i=r-\delta_1}\left[(p-1)+\left(p^{a_r-1}-1\right) - \left(p^{a_r}-1\right)\right]-\delta_2\\
& = & \displaystyle\sum^{r}_{i=1}\left(p^{a_i}-1\right)-\delta_1(p-1)\left(p^{a_r-1}-1\right) -\delta_2\\
& = & \mathsf{d}(G)-\delta. 
\end{eqnarray*}
On the other hand, $S$ contains the following number of elements of maximal order $p^{a_r}$:
\begin{eqnarray*}
\left|S_{p^{a_r}}\right| & = & \displaystyle\sum^{r-\delta_1-1}_{i=j_0}\left(p^{a_r}-1\right)+\displaystyle\sum^{r-1}_{i=r-\delta_1}(p-1)+\left(p^{a_r}-1\right)-\delta_2-\left\lfloor \frac{\delta_2}{p-1} \right\rfloor\\
                         & = & \left(r-\delta_1-j_0+1\right)\left(p^{a_r}-1\right)+\delta_1(p-1)-\delta_2-\left\lfloor \frac{\delta_2}{p-1} \right\rfloor\\                   
                         & = &                         \left(r-j_0+1\right)\left(p^{a_r}-1\right)-\delta-\delta_1\left(p^{a_r-1}-1\right)-\left\lfloor \frac{\delta_2}{p-1} \right\rfloor\\
                         & = & \left(r-j_0+1\right)\left(p^{a_r}-1\right)-\delta-\left\lfloor \frac{\delta}{p-1} \right\rfloor,
\end{eqnarray*} 
and we are done.

\medskip 
$\textbf{Case 2.}$ If $(r-j_0+1)(p-1)\left(p^{a_r-1}-1\right) \leq \delta < (r-j_0+1)(p-1)p^{a_r-1}$, then let us write
$$\delta'=\delta-(r-j_0+1)(p-1)\left(p^{a_r-1}-1\right),$$
and
$$\delta'=\delta'_1(p-1)+\delta'_2, \text{ with } \delta'_1 \in \llbracket 0,(r-j_0)\rrbracket \text{ and } \delta'_2 \in \llbracket 0,p-2 \rrbracket.$$
Thus, the sequence  
$$S=\left(\displaystyle\prod^{j_0-1}_{i=1}e^{p^{a_i}-1}_i\right)\left(\displaystyle\prod^{r-\delta'_1-1}_{i=j_0}\left(e_i\right)^{p-1}\left(pe_i\right)^{p^{a_r-1}-1}\right)\left(\displaystyle\prod^{r-1}_{i=r-\delta'_1}\left(pe_i\right)^{p^{a_r-1}-1}\right) \left(e_{r}\right)^{p-1-\delta'_2}\left(pe_{r}\right)^{p^{a_r-1}-1}$$
is a zero-sumfree sequence in $G$. On the one hand, since $\delta'_1 \leq (r-j_0)$, one obtains
\begin{eqnarray*}
\left|S\right| & = & \displaystyle\sum^{j_0-1}_{i=1}\left(p^{a_i}-1\right)+(r-\delta'_1-j_0)(p-1)+(r-j_0+1)\left(p^{a_r-1}-1\right)+(p-1)-\delta'_2\\
& = & \displaystyle\sum^{j_0-1}_{i=1}\left(p^{a_i}-1\right)+(r-j_0+1)(p-1)+(r-j_0+1)\left(p^{a_r-1}-1\right)-\delta'\\
& = & \displaystyle\sum^{j_0-1}_{i=1}\left(p^{a_i}-1\right)+(r-j_0+1)\left[(p-1)+\left(p^{a_r-1}-1\right)+(p-1)\left(p^{a_r-1}-1\right)\right]-\delta\\
& = & \displaystyle\sum^{j_0-1}_{i=1}\left(p^{a_i}-1\right)+(r-j_0+1)\left(p^{a_r}-1\right)-\delta\\
& = & \mathsf{d}(G)-\delta. 
\end{eqnarray*}
On the other hand, $S$ contains the following number of elements of maximal order $p^{a_r}$:
\begin{eqnarray*}
\left|S_{p^{a_r}}\right| & = & (r-\delta'_1-j_0)(p-1)+(p-1)-\delta'_2\\
                         & = & (r-\delta'_1-j_0)(p-1)+(p-1)-\delta'_2\\ 
                         & = & (r-j_0+1)(p-1)-\delta'\\                   
                         & = & (r-j_0+1)\left(p^{a_r}-1\right)-\delta-(r-j_0+1)\left(p^{a_r-1}-1\right),                       
\end{eqnarray*} 
and we are done. 

\medskip 
$\textbf{Case 3.}$ If $(r-j_0+1)(p-1)p^{a_r-1} \leq \delta \leq \mathsf{d}(G)-1$, then
\begin{eqnarray*}
(r-j_0+1)\left(p^{a_r}-1\right)-\delta-f(\delta) & \leq & (r-j_0+1)\left[\left(p^{a_r}-1\right)-(p-1)p^{a_r-1}-\left(p^{a_r-1}-1\right)\right]\\
& \leq & 0,
\end{eqnarray*}
as well as
\begin{eqnarray*}
\mathsf{d}(G)-\delta & \leq & \displaystyle\sum^r_{i=1}\left(p^{a_i}-1\right)-(r-j_0+1)(p-1)p^{a_r-1}\\
& = & \displaystyle\sum^{j_0-1}_{i=1}\left(p^{a_i}-1\right)+(r-j_0+1)\left[\left(p^{a_r}-1\right)-(p-1)p^{a_r-1}\right]\\
& = & \displaystyle\sum^{j_0-1}_{i=1}\left(p^{a_i}-1\right)+(r-j_0+1)\left(p^{a_r-1}-1\right).
\end{eqnarray*}
Now, let us consider the zero-sumfree sequence
$$S=\left(\displaystyle\prod^{j_0-1}_{i=1}e^{p^{a_i}-1}_i\right)\left(\displaystyle\prod^{r}_{i=j_0}\left(pe_i\right)^{p^{a_r-1}-1}\right),$$
which does not contain any element of maximal order. Thus, choosing any subsequence of $S$ with length $\mathsf{d}(G)-\delta$, we obtain that $\Gamma_{\delta}(G)=0$, which is the desired result.
\end{proof}

It is now easy, using Theorem \ref{main theorem} and Proposition \ref{majorant Gamma}, to derive Theorem \ref{main theorem 2}, which gives, in the case where $j_0=r$, the exact value of the number $\Gamma_{\delta}(G)$ for every integer $\delta \in \llbracket 0,\mathsf{d}(G)-1 \rrbracket$. 
\begin{proof}[Proof of Theorem \ref{main theorem 2}]
Since $j_0=r$, one obtains the following lower bound:
$$\Gamma_{\delta}(G) \geq (p^{a_r}-1)-\delta-\left\lfloor \frac{\delta }{p-1} \right\rfloor.$$
Consequently, one can distinguish three cases.

\medskip 
$\textbf{Case 1.}$ If $0 \leq \delta < (p-1)\left(p^{a_r-1}-1\right)$, then the upper bound given by Proposition \ref{majorant Gamma} implies that
$$\Gamma_{\delta}(G) = (p^{a_r}-1)-\delta-\left\lfloor \frac{\delta }{p-1} \right\rfloor.$$

\medskip 
$\textbf{Case 2.}$ If $(p-1)\left(p^{a_r-1}-1\right) \leq \delta < (p-1)p^{a_r-1}$, then the upper bound of Proposition \ref{majorant Gamma} implies that
$$\Gamma_{\delta}(G) \leq (p^{a_r}-1)-\delta-\left(p^{a_r-1}-1\right).$$
Now, since
$$\left(p^{a_r-1}-1\right) = \left\lfloor \frac{\delta }{p-1} \right\rfloor,$$
one obtains the desired equality:
$$\Gamma_{\delta}(G) = (p^{a_r}-1)-\delta-\left\lfloor \frac{\delta }{p-1} \right\rfloor.$$

\medskip 
$\textbf{Case 3.}$ If $(p-1)p^{a_r-1} \leq \delta \leq \mathsf{d}(G)-1$, then Proposition \ref{majorant Gamma} implies that $\Gamma_{\delta}(G)=0$, and the proof is complete.
\end{proof}

\section{Proof of Theorem \ref{main theorem bis}}
\label{Proof of the main theorem bis}
To start with, we prove the following lemma, which can be seen as a little more general version of Proposition $4.3$ in \cite{GaoGero02}. 
\begin{lem}\label{heights of a long zero-sumfree sequence} Let $G$ be a finite Abelian $p$-group and $S=(g_1,\dots,g_\ell)$ be a zero-sumfree sequence in $G$ with $\left|S\right| \geq \mathsf{d}(G)-p+2$. Then, every element of $S$ has height $1$.
\end{lem}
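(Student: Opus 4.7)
The plan is to deduce this directly from Olson's theorem (Theorem \ref{Olson pour les p-groupes}). Recall that the height $\alpha(g)$ is by definition a power of $p$, and $\alpha(g)=1$ precisely when $g$ is not of the form $p h$ for any $h\in G$. So the assertion ``every element of $S$ has height $1$'' is equivalent to saying that $\alpha(g_i)\le p-1$, hence $\alpha(g_i)=1$, for every $i$.

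Suppose for contradiction that some element $g_{i_0}$ of $S$ has height at least $p$, i.e.\ $\alpha(g_{i_0})\ge p$. Since $\alpha(g_i)\ge 1$ for every $i\in\llbracket 1,\ell\rrbracket$, I would bound the total height from below by isolating the index $i_0$:
\[
\sum_{i=1}^{\ell}\alpha(g_i)\;\ge\;(\ell-1)\cdot 1+p\;=\;\ell+p-1.
\]
Combining this with the hypothesis $\ell=|S|\ge \mathsf{d}(G)-p+2$ yields
\[
\sum_{i=1}^{\ell}\alpha(g_i)\;\ge\;\mathsf{d}(G)+1\;>\;\mathsf{d}(G).
\]
By Olson's theorem, $S$ cannot then be zero-sumfree, contradicting the hypothesis. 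Therefore $\alpha(g_i)<p$, and since $\alpha(g_i)$ is a power of $p$, we must have $\alpha(g_i)=1$ for each $i$.

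There is essentially no obstacle in this argument: the proof is a one-line application of Olson's inequality together with the trivial lower bound $\alpha(g_i)\ge 1$, and the length threshold $\mathsf{d}(G)-p+2$ is precisely tuned so that introducing even a single element of height $\ge p$ pushes the sum of heights past $\mathsf{d}(G)$. The slight generalisation compared to Proposition~$4.3$ in \cite{GaoGero02} lies only in allowing the length to drop below $\mathsf{d}(G)$ by up to $p-2$, which costs nothing in the proof.
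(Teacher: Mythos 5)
Your proof is correct and is essentially identical to the paper's: both isolate the single element of height at least $p$, bound the remaining $\ell-1$ heights below by $1$, and apply Olson's theorem (Theorem \ref{Olson pour les p-groupes}) to get $\sum_i \alpha(g_i) \geq \mathsf{d}(G)+1 > \mathsf{d}(G)$, a contradiction.
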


\begin{proof} Suppose that there exists an element in $S$, say $g_1$, verifying $\alpha(g_1)>1$. Then $\alpha(g_1) \geq p $, and setting $T=S \backslash g_1$, we deduce that
\begin{eqnarray*} \displaystyle\sum^\ell_{i=1} \alpha(g_i) & \geq & p + \left|T\right|\\
                                                           & \geq & p + \left(\mathsf{d}(G)-p+1\right)\\
                                                           &   =  & \mathsf{d}(G) + 1\\
                                                           &   >  & \mathsf{d}(G). 
\end{eqnarray*}
Thus, by Theorem \ref{Olson pour les p-groupes}, $S$ cannot be a zero-sumfree sequence, which is a contradiction.
\end{proof}

We can now prove Theorem \ref{theorem recapitulatif} $(i)$, as a simple corollary of the following stronger theorem.

\begin{theorem}\label{main theorem bis}
Let $G \simeq C_{n_1} \oplus \dots \oplus C_{n_r},$ with $1 < n_1 \text{ } |\text{ } \dots \text{ }|\text{ } n_r \in \mathbb{N}$, be a finite Abelian $p$-group. Given a zero-sumfree sequence $S$ in $G$ verifying $\left| S \right| \geq \mathsf{d}(G)-p+2,$ one has for all $g \in S$:
$$n_1 \text{ } | \text{ } \text{\em ord\em}_G(g).$$
\end{theorem}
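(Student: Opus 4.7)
The plan is to deduce this as an essentially immediate corollary of Lemma \ref{heights of a long zero-sumfree sequence}, by translating the height-one condition into the coordinates of a basis of $G$. Since $G$ is a $p$-group we have $n_i = p^{a_i}$ with $1 \leq a_1 \leq \dots \leq a_r$, and as all orders in $G$ are powers of $p$, the divisibility $n_1 \mid \text{ord}_G(g)$ is equivalent to the inequality $\text{ord}_G(g) \geq p^{a_1}$.

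I would fix a basis $(e_1,\dots,e_r)$ of $G$ with $\text{ord}_G(e_i) = p^{a_i}$ and, for $g \in S$, write $g = \sum_{i=1}^r c_i e_i$ with $c_i \in \llbracket 0, p^{a_i}-1 \rrbracket$. Since $\left|S\right| \geq \mathsf{d}(G)-p+2$, Lemma \ref{heights of a long zero-sumfree sequence} gives $\alpha(g) = 1$, meaning that $g$ cannot be written as $ph$ for any $h \in G$. The image of the multiplication-by-$p$ map on $G$ is $\bigoplus_{i=1}^r p C_{p^{a_i}}$, so the height-one condition is equivalent to the statement that there exists at least one index $i_0 \in \llbracket 1,r \rrbracket$ with $p \nmid c_{i_0}$.

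From this one coordinate observation the lower bound on the order follows immediately: if $p^a g = 0$ then $p^{a_i} \mid p^a c_i$ for every $i$, and specializing to $i = i_0$ (where the $p$-adic valuation of $c_{i_0}$ is zero) forces $a \geq a_{i_0}$. Hence $\text{ord}_G(g) \geq p^{a_{i_0}} \geq p^{a_1} = n_1$, which is the desired conclusion.

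There is really no serious obstacle here: the substantive work is packaged inside Lemma \ref{heights of a long zero-sumfree sequence} (which itself relies on Theorem \ref{Olson pour les p-groupes}). The only point requiring minimal care is the unwinding of the abstract definition of the height $\alpha(g)$ into the coordinate statement ``some $c_{i_0}$ is coprime to $p$''; once this is done, the bound on $\text{ord}_G(g)$ is a one-line verification.
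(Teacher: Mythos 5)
Your proposal is correct and follows essentially the same route as the paper: invoke Lemma \ref{heights of a long zero-sumfree sequence} to get $\alpha(g)=1$, translate this into the existence of a coordinate $c_{i_0}$ not divisible by $p$, and conclude $\text{ord}_G(g) \geq p^{a_{i_0}} \geq p^{a_1} = n_1$, which (orders being $p$-powers) gives the divisibility. The only cosmetic difference is that you verify the order bound by an explicit valuation computation where the paper simply writes $\text{ord}_G(g)=\max_i \text{ord}_{C_{n_i}}(a_i)$.
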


\begin{proof}
The sequence $S$, with $\left| S \right| \geq \mathsf{d}(G)-p+2$, is a zero-sumfree sequence. Thus, by Lemma \ref{heights of a long zero-sumfree sequence}, every element of $S$ has height $1$. Let $g=[a_1,\dots,a_r]$ be an element of $S$. The equality $\alpha(g)=1$ implies that there exists $i_0 \in \llbracket 1,r \rrbracket$ such that $p$ does not divide $a_{i_0}$. Therefore, one has $\text{ord}_{C_{n_{i_0}}}(a_{i_0})=n_{i_0}$, and we obtain
\begin{eqnarray*}
\text{ord}_G(g) &   =  & \displaystyle\max_{i \in \llbracket 1,r \rrbracket} \text{ord}_{C_{n_i}}(a_i)\\
                & \geq & \text{ord}_{C_{n_{i_0}}}(a_{i_0})\\
                &   =  & n_{i_0}\\    
                & \geq & n_1,
\end{eqnarray*}
which completes the proof.
\end{proof}

\section{A concluding remark}
\label{concluding remark}
Let $G$ be a finite Abelian $p$-group of rank $r \geq 1$. It would be interesting to find the exact value of $\Gamma_{\delta}(G)$ for every integer $\delta \in \llbracket 0,\mathsf{d}(G)-1 \rrbracket$. Regarding this problem, we propose the following conjecture, supported by Theorem \ref{main theorem}, and which states that the upper bound given by Proposition \ref{majorant Gamma} is actually the right value for $\Gamma_{\delta}(G)$. 

\begin{conjecture}\label{conjecture p-groupes} Let $G \simeq C_{p^{a_1}} \oplus \cdots \oplus C_{p^{a_r}}$, where $p \in \mathcal{P}$, $r \in \mathbb{N}^*$ and $a_1 \leq \dots \leq a_r$, with $a_i \in \mathbb{N}^*$ for all $i \in \llbracket 1,r \rrbracket$. Let $\delta \in \llbracket 0,\mathsf{d}(G)-1 \rrbracket$ and $j_0=\min \{i \in \llbracket 1,r \rrbracket \text{ } | \text{ } a_i=a_r\}$. Then, one has
$$\Gamma_{\delta}(G) = \max\left(0,(r-j_0+1)\left(p^{a_r}-1\right)-\delta-f(\delta)\right),$$
where 
$$f(\delta)=\min\left(\left\lfloor \frac{\delta}{p-1} \right\rfloor,(r-j_0+1)\left(p^{a_r-1}-1\right)\right).$$ 
\end{conjecture}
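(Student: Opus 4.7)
Proposition \ref{majorant Gamma} already furnishes the upper bound, so the task is the matching lower bound. Theorem \ref{main theorem 2} disposes of the case $j_0=r$, so the substance lies in $j_0<r$, where the lower bound of Theorem \ref{main theorem} falls short by roughly $(r-j_0)(p^{a_r-1}-1)$. Write $G = H \oplus K$ with $H = C_{p^{a_1}} \oplus \dots \oplus C_{p^{a_{j_0-1}}}$ (of exponent strictly less than $p^{a_r}$) and $K = C_{p^{a_r}}^{r-j_0+1}$, so that $G_{p^{a_r-1}} = H \oplus pK =: H^{*}$. Fix a zero-sumfree sequence $S$ in $G$ of length $\mathsf{d}(G)-\delta$ and write $S^{*}$ for its subsequence of non-maximal-order elements; since $\mathsf{d}(G) = \mathsf{d}(H) + (r-j_0+1)(p^{a_r}-1)$, the conjectured lower bound is equivalent to
$$|S^{*}| \leq \mathsf{d}(H) + f(\delta).$$

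Two ingredients are essentially immediate. First, $S^{*}$ is a zero-sumfree sequence in $H^{*}$, so Theorem \ref{Proprietes D et Cross number} $(i)$ gives $|S^{*}| \leq \mathsf{d}(H^{*}) = \mathsf{d}(H) + (r-j_0+1)(p^{a_r-1}-1)$, matching the conjecture precisely when the second branch of $f(\delta)$ is active. Second, applying Olson's theorem (Theorem \ref{Olson pour les p-groupes}) to $S$ and subtracting $|S|$ from $\sum_{g \in S}\alpha(g) \leq \mathsf{d}(G)$ yields $\sum_{g \in S}(\alpha(g)-1) \leq \delta$, so at most $\lfloor \delta/(p-1)\rfloor$ elements of $S$ have height strictly greater than one. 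Since every element of $pG$ is automatically of non-maximal order in the $p$-group $G$, this controls the contribution to $|S^{*}|$ coming from height-$\geq p$ elements. Together with the $\mathsf{d}(H^{*})$ bound, the conjecture thus reduces to the single missing inequality
$$n_0 := \#\{g \in S^{*} : \alpha(g)=1\} \leq \mathsf{d}(H).$$

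Proving this last bound is the main obstacle. A height-one non-maximal element is of the form $g = (h,k) \in H \oplus pK$ with $h \in H \setminus pH$, so its $H$-projection is non-zero modulo $p$; in particular, any $H$-zero-sum sub-sequence of the projections $\pi_H(g)$ lifts to a subsum of $S$ lying in $pK \setminus \{0\}$, a non-zero element of a subgroup of exponent $p^{a_r-1}$. My plan is to pair each such $pK$-valued subsum with a subsum of the maximal-order portion of $S$ taking the opposite value in $pK$, using Lemma \ref{lemme clef} applied to $(d',d) = (p, p^{a_r})$, whose ambient invariant is $\mathsf{D}_{(p,p^{a_r})}(G) = (r-j_0+1)(p-1)+1$ by Proposition \ref{propmarrantegenerale}: an excess of height-one non-maximal elements should force enough such matchings to produce a genuine zero-sum of $S$, contradicting zero-sumfreeness. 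The delicate point is to coordinate two disjoint extraction procedures -- one inside the $H$-projection of the height-one non-maximal part of $S$, one inside the maximal-order part of $S$ -- so that their outputs line up in $pK$ to cancel. This double bookkeeping is the natural analogue, for non-maximal-order elements, of the iterated application of Lemma \ref{lemme clef} behind Theorem \ref{main theorem}, but it is genuinely more subtle than anything so far carried out in the paper, which is why the conjecture remains open. The border regime $\Gamma_\delta(G)=0$ is handled directly by the final case of the proof of Proposition \ref{majorant Gamma}.
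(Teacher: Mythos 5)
This statement is Conjecture~\ref{conjecture p-groupes}, which the paper itself leaves \emph{open}: there is no proof in the paper to compare yours against, only the remarks that Theorem~\ref{main theorem 2} settles the case $j_0=r$ and that Theorem~\ref{main theorem bis} settles $j_0=1$ for $\delta\in\llbracket 0,p-2\rrbracket$. Your proposal is not a proof either, and you say so yourself; so the question is whether your reduction is a sound foothold. Parts of it are correct and even go beyond what the paper records: writing $H^{*}=G_{p^{a_r-1}}=H\oplus pK$, the bound $|S^{*}|\le\mathsf{d}(H^{*})=\mathsf{d}(H)+(r-j_0+1)(p^{a_r-1}-1)$ does settle the conjectured lower bound whenever the second branch of $f(\delta)$ is active, and combined with the Olson count $\sum_{g\in S}(\alpha(g)-1)\le\delta$ it settles the homocyclic case $j_0=1$ completely, since there $H$ is trivial and every non-maximal-order element lies in $pG$ and hence has height at least $p$.

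However, the pivot of your plan, the ``single missing inequality'' $n_0\le\mathsf{d}(H)$, is false as stated. Take $G=C_2\oplus C_4$ and $S=\bigl((1,2),(1,0)\bigr)$: this is zero-sumfree of length $2=\mathsf{d}(G)-2$, and both elements are of non-maximal order and of height one, so $n_0=2>1=\mathsf{d}(H)$. (The conjecture is not threatened, since $f(2)=1$ and the predicted value of $\Gamma_2(G)$ is $0$; only your intermediate inequality fails.) That counterexample lives where the second branch of $f$ is active, so you could restrict the claim to the first branch, $\lfloor\delta/(p-1)\rfloor\le(r-j_0+1)(p^{a_r-1}-1)$; but there, and already for $\delta=0$ --- where every element of $S$ has height one by Lemma~\ref{heights of a long zero-sumfree sequence}, so that $n_0=|S^{*}|$ --- the inequality $n_0\le\mathsf{d}(H)$ is word for word the assertion $\Gamma_0(G)\ge(r-j_0+1)(p^{a_r}-1)$, i.e.\ the conjecture itself. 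So in the one regime that matters the reduction reduces nothing, and the sketched ``double bookkeeping'' with Lemma~\ref{lemme clef} is precisely the step nobody has carried out: the single application of Lemma~\ref{lemme clef} with $(d',d)=(p,p^{a_r})$ in the proof of Theorem~\ref{main theorem} is what loses the term $(r-j_0)(p^{a_r-1}-1)$ in the first place. The statement remains a conjecture.
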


By our Theorem \ref{main theorem 2}, this conjecture holds true in the case where $j_0=r$. One can also notice that in the special case where $j_0=1$, Theorem \ref{main theorem bis} implies that Conjecture \ref{conjecture p-groupes} holds for every $\delta \in \llbracket 0,p-2 \rrbracket$.

\section*{Acknowledgments}
I am grateful to my Ph.D. advisor Alain Plagne for his help during the preparation of this paper. I would like also to thank Alfred Geroldinger and Wolfgang Schmid for useful comments on a preliminary version of this article, as well as the Centre de Recerca Matem\`{a}tica in Barcelona, where this work was begun.



\end{document}